\newtheorem{theorem}{Theorem}
\newtheorem{lemma}{Lemma} \newtheorem{proposition}{Proposition}
\newtheorem{remark}{Remark} 
\def\neweq#1{\begin{equation}\label{#1}} \def\endeq{\end{equation}}
\def\eq#1{(\ref{#1})} 
\def\endproof{\hfill $\Box$\par\vskip3mm} 
\newcommand\B{{\bf B}} 
\newcommand{\R}{\mathbb{R}}
\def\sr{\xi(x)}
\begin{document}

\title{On a nonlinear nonlocal hyperbolic system\\
modeling suspension bridges}

\author{Gianni ARIOLI $^\sharp$ - Filippo GAZZOLA $^\dagger$}
\date{}
\maketitle

\begin{center}
{\small $^\sharp$ MOX-- Modellistica e Calcolo Scientifico\\
        $^{\sharp,\dagger}$ Dipartimento di Matematica -- Politecnico di Milano \\
        Piazza Leonardo da Vinci 32 - 20133 Milano, Italy\\
{\tt gianni.arioli@polimi.it\ -\ filippo.gazzola@polimi.it}}
\end{center}

\begin{abstract}
We suggest a new model for the dynamics of a suspension bridge through a system of nonlinear nonlocal hyperbolic differential equations.
The equations are of
second and fourth order in space and describe the behavior of the main components of the bridge: the deck, the sustaining cables and the connecting
hangers. We perform a careful energy balance and we derive the equations from a variational principle. We then prove existence and uniqueness for
the resulting problem.
\end{abstract}

\section{Introduction}

The main span of a suspension bridge is a complex structure composed by several interacting components, see Figure \ref{suspension}.
Four towers sustain two cables that, in turn, sustain the hangers. At their lower endpoint the hangers are linked to the
roadway and sustain it from above. The roadway (or deck) may be seen as a thin rectangular plate.
The hangers are hooked to the cables and the roadway is hooked to the hangers; the weight of the deck deforms the cable and stretches the hangers that
exert a restoring action on the roadway.

\begin{figure}[ht]
\begin{center}
{\includegraphics[height=35mm, width=125mm]{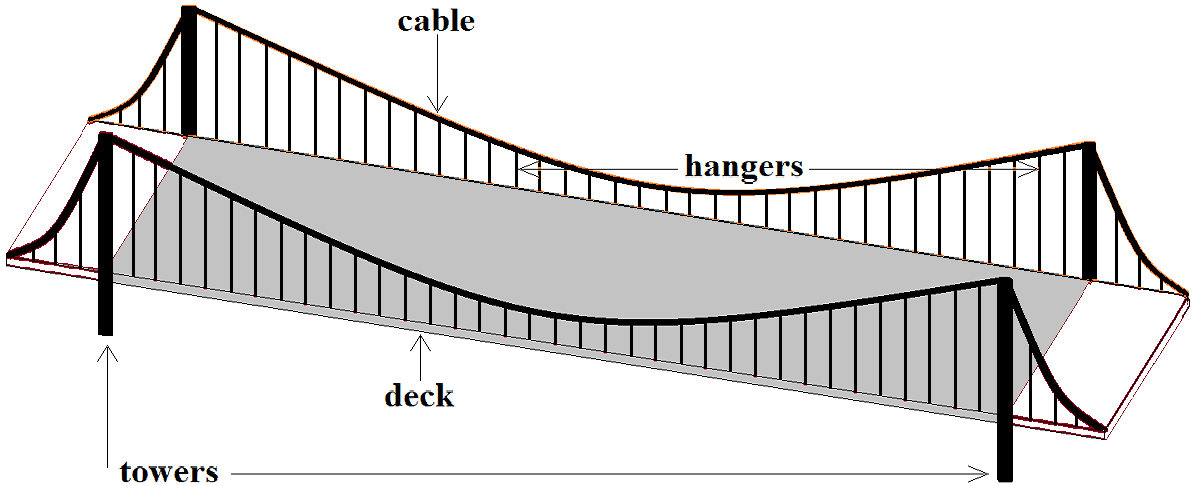}}
\caption{Sketch of a suspension bridge.}\label{suspension}
\end{center}
\end{figure}

This complex structure is extremely interesting from a mathematical point of view. It appears quite challenging to find a model
simple enough to be mathematically tractable and sufficiently accurate to display the main features of a real bridge. One of the main problems of
suspension bridges is their instability, in particular they are prone to torsional oscillations.
This is apparent in videos available on the web \cite{tacoma} as well as in many other events,
see e.g.\ \cite{bookgaz} for a survey. Partial explanations of the instability are based on aerodynamic effects such as vortex shedding, flutter,
parametric resonance, aerodynamic forces, see \cite{billah}. But none of these theories explains how a longitudinal oscillation can rapidly switch
into a torsional one. Scanlan \cite[p.209]{scanlan2} writes that the switch is due to {\em some fortuitous condition}, a justification which does not seem
very scientific. And, indeed, McKenna \cite{mckdcds} raises many doubts about the mathematical models and motivations used in classical literature.
In a recent paper \cite{argaz} we suggested that the onset of the torsional instability could be of purely structural nature. The model adopted there was
fairly simplified and did not describe accurately the nonlinear behavior of the cables+hangers restoring force. By using Poincar\'e-type maps
we were able to show that a longitudinal oscillation in an ideally isolated bridge may switch rapidly into a torsional oscillation when enough energy
is present within the structure.\par
Aiming to display the same phenomenon of transfer of energy between longitudinal and torsional oscillations in an actual bridge, in this paper we suggest
a new and realistic model which takes into account all the main components of the bridge and its quantitative structural parameters.
We perform an energy balance and we derive the corresponding
Euler-Lagrange equations by variational principles. The dynamics of the bridge is modeled through
a system of four hyperbolic equations, three of them of second order in space and one of fourth order. This system also contains nonlinear couplings
(the restoring action of the hangers) and nonlocal terms (the elongation of the extensible cables). In fact, the full energy balance is rather complicated,
therefore we approximate the equations with a first order expansion and we prove that the initial-boundary value problem for the hyperbolic system admits a unique
solution. In a forthcoming paper \cite{futuro} we will study higher order approximations.\par
In Section \ref{physmodel} we derive an accurate model and we proceed in two steps. First, in Sections \ref{beammodel} and \ref{Lagrangebeam}
we compute the Lagrangian of a system composed by a one-dimensional beam suspended to a cable through hangers,
see Figure \ref{cablebeam}. In the classical literature
\cite{biot,irvine,melan} one neglects either the mass of the cable, in which case the load is distributed per horizontal unit and the cable takes
the shape of a parabola, or the mass of the beam, in which case the load is distributed per unit length and the cable takes the shape of a catenary.
Since none of these masses is negligible, we take here into account both their contributions. This gives rise to a somehow intermediate shape and
the linearized problem becomes a Sturm-Liouville problem with a
weight. Second, in Section \ref{energybalance} we extend the result to the full suspension bridge
by modelling the roadway as a degenerate plate, that is, a beam representing the midline of the roadway
with cross sections which are free to rotate around the beam; simplified equations representing this model were
previously analyzed in \cite{berchiog,moore}. The midline corresponds to the barycenter of the cross sections.
The cross sections are seen as rods that can rotate around
their barycenter. The endpoints of the cross sections are the edges of the plate that are connected to the sustaining cables through the hangers.\par
Then we derive the Euler-Lagrange equations by variational methods, that is, by finding critical points
of the Lagrangian. The resulting equations form a system of semilinear hyperbolic equations, three of them being of second order and one of fourth order,
see \eq{primordine}. All the coefficients are continuous time-independent functions and \eq{primordine} is a nonlocal differential problem due to the
presence of the integral term representing the cables elongations. A nonlinear term is responsible for the coupling between the equations. These
peculiarities show that \eq{primordine} is not a standard problem and therefore we need to prove existence and uniqueness of a solution.
This is done in Section \ref{wellpos}.

\section{The physical model}\label{physmodel}

Throughout this paper we denote the partial derivatives of a function $y=y(x,t)$ by
$$y'=\frac{\partial y}{\partial x}\ ,\qquad \dot{y}=\frac{\partial y}{\partial t}$$
and similarly for higher order derivatives.

\subsection{A beam sustained by a cable through hangers}\label{beammodel}

As a first step, we derive the Lagrangian of a suspended beam connected to a sustaining cable through hangers. Let $w$ be the position of the beam and
$p-s$ be the position of the cable, as in Figure \ref{cablebeam}. The purpose of this section is to derive the equation of this cable-hangers-beam system.
\begin{figure}[ht]
\begin{center}
{\includegraphics[height=36mm, width=148mm]{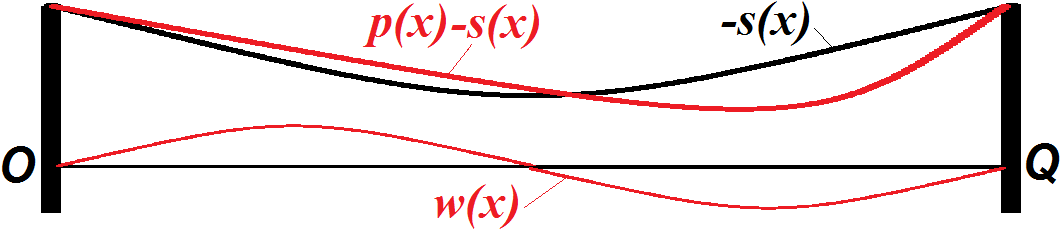}}
\caption{Cable-beam structure modeling a suspension bridge.}\label{cablebeam}
\end{center}
\end{figure}

We model the cable as a perfectly flexible string subject to vertical loads. If we assume that the string has no resistance to bending, the only internal
force is the tension $F=F(x)$ of the string which acts tangentially to the position of the curve representing the string. In Figure \ref{rope}
\begin{figure}[ht]
\begin{center}
{\includegraphics[height=48mm, width=100mm]{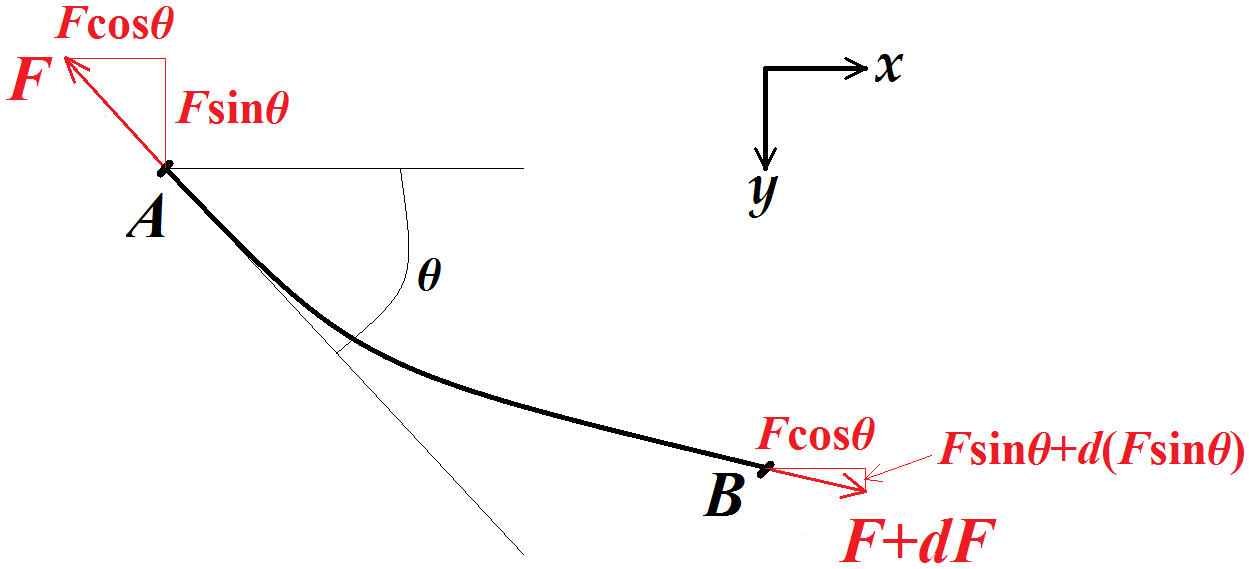}}
\caption{Equilibrium of a string.}\label{rope}
\end{center}
\end{figure}
we sketch a picture of the string whose endpoints are $A$ and $B$ and whose position is described by a function $-s(x)<0$, the origin being below $B$.
The horizontal direction represents the abscissa $x$ whereas the downwards axis represents the vertical displacement.
Denote by $\theta=\theta(x)$ the angle between the horizontal $x$-direction and the tangent to the curve so that
\neweq{wtheta}
-s'(x)=\tan\theta(x)\, .
\endeq
The horizontal component of the tension is constant, that is,
\neweq{H}
F(x)\cos\theta(x)\equiv H_0>0\, .
\endeq

The classical deflection theory of suspension bridges models the bridge structure as a combination of a string (the sustaining cable)
and a beam (the deck). The cable carries its own weight, the weight of the hangers (which we assume
to be negligible) and the weight of the beam. Possible deformations of the cable and the beam are assumed to be small, that is, we aim to model
\neweq{smallw}
\mbox{small displacements of the beam and the cable.}
\endeq
In Figure \ref{cablebeam} we represent the position $-s(x)$ of the cable at rest with $-s_0<0$ being the level of the left and right
endpoints of the cable ($s_0$ is the height of the towers). We denote by $L$ the distance between the towers. Assume that a beam of length $L$ and linear density of
mass $M$ is hanged to the cable whose linear density of mass is $m$: the segment $OQ$ represents the horizontal position of the beam at rest.
Since the spacing between hangers is small relative to the span, the hangers can be considered as a continuous membrane
connecting the cable and the beam. Then the cable is subject to a downwards vertical force given by
$$q(x)=\Big(M+m\sqrt{1+s'(x)^2}\Big)\, g$$
$g$ being the gravitational constant. In this situation, the vertical component of the tension has a variation given by
$$
\frac{d}{dx}[F(x)\sin\theta(x)]=-q(x)
$$
where we recall that the positive vertical axis is oriented downwards. In view of \eq{H} we then obtain
$$
H_0\frac{d}{dx}[\tan\theta(x)]=-q(x)
$$
Moreover, using \eq{wtheta} and taking into account that the cable
is hanged to two towers of same height $s_0$, we get
\neweq{eqparabola}
\begin{cases}
H_0s''(x)=\Big(M+m\sqrt{1+s'(x)^2}\Big)\, g\,,\\
s(0)=s(L)=s_0
\end{cases}
\endeq
If one neglects the mass of the cable ($m=0$) then the solution is the parabola
\neweq{trueparabola}
s_p(x)=s_0-\frac{Mg}{2H_0}x(L-x)\,,
\endeq
while if one neglects the mass of the beam ($M=0$) the solution is the catenary
$$
s_c(x)=\frac{H_0}{mg}\left[\cosh\left(\frac{mg}{2H_0}(2x-L)\right)-\cosh\left(\frac{mgL}{2H_0}\right)\right]+s_0\,.
$$
We point out that if we take $m=0$ in the above explicit equations and we assume that the sag-span ratio for is $1/12$ (a typical value for actual bridges,
see e.g.\ \cite[\S 15.17]{podolny}), by using \eq{trueparabola} we have
$$\frac{gM}{2H_0}=\frac{2}{3L}$$
and then
$$0<s_p(x)-s_c(x)\le s_p(L/2)-s_c(L/2)\approx 6\cdot10^{-3}\, L\qquad\forall x\in(0,L)\, .$$
Hence, for a typical length of the bridge $L=1km$ the maximum difference between the two configurations is about $6m$.\par
Our purpose is to consider both the masses of the beam and the cable ($m,M>0$). Then \eq{eqparabola} does not have simple explicit solutions.
Moreover, this second order equation has no variational structure and the problem has boundary conditions, therefore
  existence and uniqueness of the solution are not granted, but are proved in the next statement.

\begin{proposition}\label{wellposed}
For any $m,M>0$ there exists a unique solution $s=s(x)$ of \eqref{eqparabola}; this solution is symmetric with respect to $x=L/2$, that is, $s(x)=s(L-x)$.
\end{proposition}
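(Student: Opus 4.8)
The plan is to reduce the second-order boundary value problem \eqref{eqparabola} to a first-order initial value problem and then to run a shooting argument. Setting $v=s'$, the equation becomes the autonomous scalar ODE $v'=F(v)$ with $F(v):=\frac{g}{H_0}\bigl(M+m\sqrt{1+v^2}\bigr)$. Since $F$ is smooth and $|F'(v)|=\frac{gm}{H_0}\frac{|v|}{\sqrt{1+v^2}}\le\frac{gm}{H_0}$, the map $F$ is globally Lipschitz and bounded below by $\frac{gM}{H_0}>0$; hence, by the Cauchy-Lipschitz theorem, for every initial slope $\alpha\in\R$ there is a unique solution $v(\cdot;\alpha)$ defined on all of $[0,L]$, it depends continuously on $\alpha$, and it is strictly increasing in $x$ because $F>0$. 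A solution of \eqref{eqparabola} with $s(0)=s_0$ is then recovered as $s(x)=s_0+\int_0^x v(\tau;\alpha)\,d\tau$, and the remaining boundary condition $s(L)=s_0$ is equivalent to the single scalar equation $\Phi(\alpha):=\int_0^L v(\tau;\alpha)\,d\tau=0$.

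Next I would show that $\Phi$ has exactly one zero. By uniqueness for the initial value problem, two trajectories $v(\cdot;\alpha_1)$ and $v(\cdot;\alpha_2)$ with $\alpha_1<\alpha_2$ cannot cross, so $v(\cdot;\alpha_1)<v(\cdot;\alpha_2)$ on $[0,L]$, and therefore $\Phi$ is continuous and strictly increasing; this already yields uniqueness of the solution of \eqref{eqparabola}. For existence I must check that $\Phi$ changes sign. If $\alpha\ge0$ then $v(x;\alpha)\ge\alpha$ gives $\Phi(\alpha)\ge\alpha L$, so $\Phi(\alpha)\to+\infty$ as $\alpha\to+\infty$. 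For the opposite limit I would exploit the elementary bound $\sqrt{1+v^2}\le1-v$, valid for $v\le0$, which yields $F(v)\le a-bv$ with $a=\frac{g(M+m)}{H_0}$ and $b=\frac{gm}{H_0}$; a Gronwall/comparison estimate against the linear equation $\bar v'=a-b\bar v$, $\bar v(0)=\alpha$, then shows that for $\alpha$ sufficiently negative the trajectory stays negative on the whole interval and that $\int_0^L v(\tau;\alpha)\,d\tau\to-\infty$ as $\alpha\to-\infty$. By the intermediate value theorem there is a unique $\alpha^\ast$ with $\Phi(\alpha^\ast)=0$, which produces the unique solution $s$ (note $\alpha^\ast<0$, consistent with $s$ being convex).

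Finally, symmetry follows for free from uniqueness. If $s$ solves \eqref{eqparabola}, set $\tilde s(x):=s(L-x)$; then $\tilde s'(x)=-s'(L-x)$ and $\tilde s''(x)=s''(L-x)$, so $\tilde s$ satisfies the same equation (the right-hand side depends on $s'$ only through $s'^2$) and the same boundary conditions $\tilde s(0)=\tilde s(L)=s_0$. By the uniqueness just established, $\tilde s\equiv s$, i.e.\ $s(x)=s(L-x)$.

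I expect the main obstacle to be the existence step, and more precisely the verification that $\Phi(\alpha)\to-\infty$ as $\alpha\to-\infty$: because $F$ grows linearly, a very negative initial slope is corrected at an exponential rate, so one must check quantitatively (via the comparison with the linear equation above) that the trajectory remains negative long enough on $[0,L]$ for its integral to diverge. The reduction to first order, the monotonicity of $\Phi$, and the symmetry argument are all routine once the initial value problem is placed in the Cauchy-Lipschitz framework.
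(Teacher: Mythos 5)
Your proof is correct, but it takes a genuinely different route from the paper's. You shoot from the endpoint in the slope variable: reducing to the autonomous first-order equation $v'=F(v)$ for $v=s'$, you get uniqueness from strict monotonicity of the shooting map $\Phi(\alpha)=\int_0^L v(\tau;\alpha)\,d\tau$ (non-crossing of trajectories), existence from the intermediate value theorem --- which forces you to do the one piece of quantitative work, the comparison with $\bar v'=a-b\bar v$ showing $\Phi(\alpha)\to-\infty$ as $\alpha\to-\infty$ --- and symmetry drops out at the very end as a corollary of uniqueness and the reflection invariance $x\mapsto L-x$. The paper shoots instead from the midpoint $x=L/2$ in the height variable $s_1$: since the equation contains no zeroth-order term, solutions of the initial value problem $s(L/2)=s_1$, $s'(L/2)=0$ differ by additive constants and are automatically symmetric about $L/2$ (reflection plus IVP uniqueness), so there is trivially a unique $s_1$ achieving $s(0)=s(L)=s_0$ --- no asymptotics, no intermediate value theorem, no comparison argument. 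The price is that this only yields existence and uniqueness among \emph{symmetric} solutions, so the paper must then show that every solution of \eqref{eqparabola} is symmetric (convexity, the Lagrange theorem to locate a unique critical point $x_0$, reflection about $x_0$ together with IVP uniqueness, and finally $x_0=L/2$ from the boundary conditions). In short: you prove uniqueness first and deduce symmetry; the paper proves symmetry first and deduces uniqueness. Your route costs a quantitative estimate (which you correctly identified as the delicate step, and your bound $\sqrt{1+v^2}\le 1-v$ for $v\le 0$ does the job), while the paper's exploitation of vertical translation invariance avoids all estimates; both arguments ultimately rest on the same structural facts, namely uniqueness for the initial value problem and the absence of $s$ itself from the right-hand side.
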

\begin{proof} We first prove the existence and uniqueness of a symmetric solution of \eq{eqparabola}, then we show that any solution of \eq{eqparabola}
is symmetric. These two steps will prove the statement.\par
For any $s_1\in\R$ the initial value problem
\neweq{ivp3}
\begin{cases}
H_0s''(x)=\Big(M+m\sqrt{1+s'(x)^2}\Big)\, g\,,\\
s(L/2)=s_1\,,\\
s'(L/2)=0\,,
\end{cases}
\endeq
has a unique solution defined on the whole real line for all values of $s_1$. Moreover, solutions of \eq{ivp3} with different
values of $s_1$ differ by a constant and they are all symmetric with respect to $x=L/2$. Then there exists a unique $s_1<s_0$ such that the solution of
\eq{ivp3} satisfies $s(0)=s(L)=s_0$: this solution is the unique symmetric solution of \eq{eqparabola}.\par
Take now a solution $\overline{s}$ of \eq{eqparabola} which we know to exist in view of what we just proved. Since $\overline{s}$ is convex and
$\overline{s}(0)=\overline{s}(L)$, by the Lagrange Theorem there exists a unique $x_0\in(0,L)$ such that $\overline{s}'(x_0)=0$. Hence,
$\overline{s}$ solves the initial value problem
\neweq{ivp2}
\begin{cases}
H_0s''(x)=\Big(M+m\sqrt{1+s'(x)^2}\Big)\, g\,,\\
s(x_0)=\bar s(x_0)\,,\\
s'(x_0)=0\,.
\end{cases}
\endeq
But also $\overline{s}(2x_0-x)$ solves \eq{ivp2}: then by uniqueness of the solution we infer that $\overline{s}(x)\equiv\overline{s}(2x_0-x)$. Since
$\overline{s}(0)=\overline{s}(L)=s_0$ and since $\overline{s}$ is convex, we finally deduce that $2x_0=L$. Therefore, any solution
$\overline{s}$ of \eq{eqparabola} satisfies $\overline{s}(x)=\overline{s}(L-x)$.\end{proof}

Hence, for general $m,M>0$ the cable takes the shape of a curve with a $\cup$-shaped graph, something in between a parabola and a catenary.
The length at rest of the cable is
\neweq{length}
L_c=\int_0^L\sqrt{1+s'(x)^2}\,dx\,.
\endeq
Let
\neweq{xi}
\sr:=\sqrt{1+s'(x)^2}
\endeq
be the local length of the cable, which is related to the unique solution of \eq{eqparabola} (see Proposition
\ref{wellposed}) and will often appear in the computations. In classical literature this term is usually approximated with
$\sr\equiv1$. While modeling suspension bridges, Biot and von K\'arm\'an \cite[p.277]{biot} warn the reader by writing
{\em ...whereas the deflection of the beam may be considered small, the deflection of the string, i.e., the deviation of its shape from a straight line,
has to be considered as of finite magnitude.} Whence, \eq{smallw} is acceptable whereas no approximation of $s'(x)$ should be taken.
Nevertheless, at a subsequent stage, Biot-von K\'arm\'an \cite[(5.14)]{biot} decide to {\em neglect $s'(x)^2$ in comparison with unity}: this leads to
$\sr\equiv1$. As shown in \cite{gjs} this mistake may significantly change the responses of the system. Needless to say, this rough approximation
considerably simplifies all the computations.

\subsection{Lagrangian of the cable-hangers-beam system}\label{Lagrangebeam}

In this section we write all the components of the Lagrangian of the single cable-hangers-beam system. Since the energies involved are complicated,
we will use \eq{smallw} in order to approximate them with asymptotic expansions up to second order terms. This will give rise to Euler-Lagrange
equations with linear terms.\par\medskip\noindent
$\bullet$ {\bf Kinetic energy of the beam.} If $w=w(x,t)$ denotes the position of the beam, then the kinetic energy of the beam is given by
\neweq{kinetikbeam}
E_{kb}=\frac{M}{2}\int_0^L\dot w^2\, dx\, .
\endeq
$\bullet$ {\bf Kinetic energy of the cable.} If $p(x,t)-s(x)$ denotes the position of the cable, then the kinetic energy of the cable is given by
\neweq{kinetikcable}
E_{kc}=\frac{m}{2}\int_0^L\dot p^2\sr\, dx\, .
\endeq
$\bullet$ {\bf Gravitational energy.} If we neglect the small extensions of the cable, the gravitational energy is given by
$$
E_g=-g\int_0^L\Big(Mw+mp\sr\Big)\,dx\,.
$$
The minus sign is due to the downward orientation of the vertical axis.\par\noindent
$\bullet$ {\bf Elastic energy in the hangers.} The hangers behave as stiff linear springs when in tension and they do not apply any force when compressed.
The latter case is called {\em slackening of the hangers}. Let $\lambda=\lambda(x)$ be the length of the unloaded hanger at position $x\in(0,L)$; this is
the vertical length of the hanger when it is fixed to the sustaining cable and {\em before} hanging the beam. After the beam is installed, the hangers are
in tension and reach a new length $s(x)>\lambda(x)$, where $s$ is the solution of \eq{eqparabola}; if no additional load acts on the system, the equilibrium
position of the beam is $w(x)\equiv0$ (corresponding to the segment $OQ$ in Figure \ref{cablebeam}) and the position of the cable at equilibrium (beam
installed) is $-s(x)$. As one expects from a linear spring,
the Hooke constant $\kappa(x)$ of each hanger is proportional to the inverse of its unloaded length, so that
$$
\kappa(x)=\frac{\kappa_0}{\lambda(x)}\qquad(\kappa_0>0)\,.
$$
Then the (linear density of) force due to the hanger in position $x$ is
\neweq{pospart}
\kappa(x)\Big(s(x)-\lambda(x)\Big)\,.
\endeq
Since the (linear density of) weight of the beam is $Mg$, we find
\neweq{gMk}
Mg=\kappa(x)\Big(s(x)-\lambda(x)\Big)=\frac{\kappa_0\Big(s(x)-\lambda(x)\Big)}{\lambda(x)}
\endeq
which shows that the relative elongation of the hangers is proportional to the density of weight after the beam is installed: this relationship readily
gives the following dependence of the elongation $s(x)$ on the unloaded length
$$
s(x)=\left(1+\frac{Mg}{\kappa_0}\right)\lambda(x)\, .
$$
The actual length of the hanger is $w(x,t)-(p(x,t)-s(x))$, so that the hanger applies a force
\begin{eqnarray}
F(w-p) &=& \kappa(x)\Big(w(x,t)-(p(x,t)-s(x))-\lambda(x)\Big)^+=\kappa(x)\Big(w(x,t)-p(x,t)+\frac{Mg}{\kappa(x)}\Big)^+ \notag\\
\ &=& \Big((\kappa_0+Mg)\frac{w(x,t)-p(x,t)}{s(x)}+Mg\Big)^+\,;\label{force}
\end{eqnarray}
note that $F(0)=Mg$, that is, the restoring force of the hangers balances gravity at equilibrium. The positive part in \eq{force} models the fact that
the hangers behave as linear springs when extended and do not yield any force when slackened. The equalities in \eq{force} are due to \eq{gMk}.
The elastic energy is then given by
\begin{eqnarray}
E_h(w-p) &=& \frac12 \int_0^L \kappa(x)\bigg(\Big(w(x,t)-p(x,t)+\frac{Mg}{\kappa(x)}\Big)^+\bigg)^2\, dx \notag\\
\ &=&\frac12 \int_0^L \kappa(x)\bigg(\Big(w(x,t)-p(x,t)+s(x)-\lambda(x)\Big)^+\bigg)^2\, dx\,.\label{elenhan}
\end{eqnarray}

\begin{remark}
{\em The special form of $F$ in \eq{force} was first suggested by McKenna-Walter \cite{McKennaWalter} and well models the possible slackening of the
hangers, at least in a first order approximation as in the present paper. Brownjohn \cite{brown} claims that the slackening mechanism is not as simple
as an on/off force, as described by the positive part of the elongation. Our results remain true if the force $F$ is replaced by any Lipschitz continuous
function.\endproof}
\end{remark}
\noindent
$\bullet$ {\bf Bending energy of the beam.} The bending energy of a beam depends on its curvature, see \cite{biot} and also \cite{gazgruswe}
for a more recent approach and further references. The Young modulus $E$ is a measure of the stiffness of an elastic material and is
defined as the ratio stress/strain. If $Idx$ denotes the moment of inertia of a
cross section of length $dx$, then the constant quantity $EI$ is the flexural rigidity of the beam. The energy necessary to bend
the beam is the square of the curvature times half the flexural rigidity:
\neweq{bendd}
E_B=\frac{EI}{2}\int_0^L\frac{(w'')^{2}}{\left( 1+(w')^{2}\right)^{3}}\sqrt{1+(w')^2}\, dx
\endeq
where we highlighted the curvature and the arclength. This energy is not convex and it fails to be
coercive in any reasonable functional space so that standard methods
of calculus of variations do not apply. Assuming \eq{smallw}, an asymptotic expansion in \eq{bendd} yields
\neweq{benden}
E_B=\frac{EI}{2}\int_0^L\frac{(w'')^{2}}{\left( 1+(w')^{2}\right)^{5/2}}\, dx\approx
\frac{EI}{2}\int_0^L (w'')^2\left(1-\frac52 (w')^{2}\right)\, dx=\frac{EI}{2}\int_0^L (w'')^2\, dx+o(w^3)\, .
\endeq
Therefore, from now on, we simply take
$$
E_B=\frac{EI}{2} \int_0^L (w'')^{2}\, dx\, .
$$
\noindent
$\bullet$ {\bf Stretching energy of the beam.} If large deformations are involved, the strain-displacement relation is not linear and
a possible nonlinear model was suggested by Woinowsky-Krieger \cite{woinowsky}: he modified the classical Bernoulli-Euler beam theory by assuming
a nonlinear dependence of the axial strain on the deformation gradient and by taking into account the stretching of the beam due to its elongation in
the longitudinal direction. In this situation there is a coupling between bending and stretching and the stretching energy is proportional to the
elongation of the beam which results in
$$
E_S=\frac{\gamma}{2}\left(\int_0^L\Big(\sqrt{1+(y')^2}-1\Big)\, dx\right)^2\approx\frac{\gamma}{8}\left(\int_0^L(y')^2\, dx\right)^2
$$
where $\gamma>0$ is the elastic constant of the beam. Since this term is of fourth order, in the sequel we drop it.\par\noindent
$\bullet$ {\bf Stretching energy of the cable.} The tension of the cable consists of two parts, the tension at rest
\neweq{acca0}
H(x)=H_0\sr
\endeq
and the additional tension
$$
\frac{AE}{L_c}\, \Gamma(p)
$$
due to the increment of length $\Gamma(p)$ of the cable. The latter requires the energy
\neweq{Etc}
E_{tc}(p)=\frac{AE}{2L_c}\,\Gamma(p)^2\,.
\endeq

Assuming \eq{smallw}, we may approximate $E_{tc}(p)$ as follows. First, we note the asymptotic expansion
\neweq{asymptotic}
\sqrt{1+[s'(x)-p'(x)]^2}-\sqrt{1+s'(x)^2}=-\frac{s'(x)p'(x)}{\sr}+\frac{p'(x)^2}{2\sr^3}+o\Big(p'(x)^2\Big)\, .
\endeq
Then we approximate the increment $\Gamma(p)$ of the length of the cable, due to its displacement $p$, by using \eq{asymptotic}:
$$\Gamma(p):=\int_0^L\sqrt{1+[s'(x)-p'(x)]^2}\,dx-L_c\approx\int_0^L\left(\frac{p'(x)^2}{2\sr^3}-\frac{s'(x)p'(x)}{\sr}\right)\, dx\, .$$
By squaring we find
$$\Gamma(p)^2=\left(\int_0^L \frac{s'(x)p'(x)}{\sr}\, dx\right)^2+o\Big((p')^2\Big)\, .$$
Therefore, we approximate \eq{Etc} by
$$E_{tc}(p)=\frac{AE}{2L_c}\left(\int_0^L \frac{s'(x)p'(x)}{\sr}\,dx\right)^2\, .$$
By taking the variation of this energy and integrating by parts we obtain
\neweq{deriv}
dE_{tc} (p)z = -\frac{AE}{L_c}\left(\int_0^L \frac{s'(x)p'(x)}{\sr}\, dx\right)\int_0^L \frac{s''(x)z(x)}{\sr^3}\, dx+o(p)\, .
\endeq

On the other hand, the amount of energy needed to deform the cable at rest under the tension \eq{acca0} in the infinitesimal interval $[x,x+dx]$
from the original position $-s(x)$ to $-s(x)+p(x)$ is the variation of length times the tension, that is
$$
E(x)\,dx=H_0\sr\, \Big(\sqrt{1+[s'(x)-p'(x)]^2}-\sqrt{1+s'(x)^2}\Big)\, dx\,,
$$
hence, using \eq{asymptotic} and neglecting the higher order terms, the energy necessary to deform the whole cable is
\neweq{defenergy}
E(p)=\int_0^L E(x)\,dx=H_0\int_0^L\left(-s'(x)p'(x)+\frac{p'(x)^2}{2\sr^2}\right)\,dx\,.
\endeq
Then, the variation of energy $\sigma^{-1}[E(p+\sigma z)-E(p)]$ as $\sigma\to0$ and some integration by parts lead to
\begin{eqnarray}
dE(p)z &=& H_0\int_0^L\bigg(-s'(x)z'(x)+\frac{p'(x)z'(x)}{\sr^2}\bigg)\,dx \notag\\
\ &=& H_0\int_0^L\bigg(s''-\frac{p''}{\sr^2}+\frac{2s's''p'}{\sr^4}\bigg)z(x)\,dx \notag\\
\ &=& H_0\int_0^L\bigg(s''-\frac{p''}{\sr^2}+\frac{2s's''p'}{\sr^4}\bigg)z(x)\,dx\,.\label{H0}
\end{eqnarray}
\noindent
$\bullet$ {\bf The Euler-Lagrange equations.} The Lagrangian of the
system is obtained as the sum of the kinetic energies minus the sum of
the potential energies. To compute the Euler-Lagrange equations we derive the term
\neweq{acca2}
H_0\bigg(s''(x)-\frac{p''(x)}{\sr^2}+\frac{2s'(x)s''(x)p'(x)}{\sr^4}\bigg)
=\Big(M+m\sr\Big)\, g-\frac{H_0}{\sr^2}p''(x)+\frac{2H_0\, s'(x)s''(x)}{\sr^4}p'(x)
\endeq
from \eq{H0} and using \eq{eqparabola}. Then, from \eq{deriv} we derive the additional tension in the cable $h(p)$ produced by its displacement $p$:
\neweq{accadip}
h(p)=-\frac{AE}{L_c}\left[\int_0^L\frac{s'(x)p'(x)}{\sr}dx\right]\, \frac{s''(x)}{\sr^3}\, .
\endeq

\begin{remark} {\em As already mentioned, in classical engineering literature \cite{biot,melan} one usually replaces $\sr=\sqrt{1+s'(x)^2}$ with
$1$ and neglects the mass of the cable ($m=0$) so that one finds \eq{trueparabola}, that is, $s''(x)=\frac{Mg}{H_0}$; with these
rough approximations, one can easily obtain also the {\it second order expansion} of $h(p)$, that is,
\begin{eqnarray*}
\ & & \frac{AE}{2L_c}\left(\int_0^L p'(x)^2\,dx\right)s''(x)+\frac{AE}{L_c}\left(\int_0^L s'(x)p'(x)\, dx\right)(p''(x)-s''(x)-3s'(x)s''(x)p'(x))\\
\ & = & \frac{AE}{2L_c}\left(\int_0^L p'(x)^2\,dx\right)s''(x)-\frac{AE}{L_c}\frac{Mg}{H_0}\left(\int_0^L p(x)\, dx\right)(p''(x)-s''(x)-3s'(x)s''(x)p'(x))
\end{eqnarray*}
where we only see the terms of the expansion up to order $2$. However, in literature one finds several further simplifications.
Timoshenko \cite{timo1} (see also \cite[Chapter 11]{timoshenkobridge}) manipulates the functional $\Gamma(p)$ and reaches the expression
$$\frac{AE}{L_c}\left(\int_0^L \left(\frac{Mg}{H_0}\, p(x)+\frac{p'(x)^2}{2}\right)\,dx\right)(s''(x)-p''(x))$$
see \cite[(11.16)]{timoshenkobridge}. This expression fails to consider some second order terms but inserts some third order terms. Further
simplifications can be found in literature; Biot-von K\'arm\'an \cite[(5.14)]{biot} neglect the second term and simply obtain
$$\frac{AE}{L_c}\frac{Mg}{H_0}\left(\int_0^L p(x)\,dx\right)(s''(x)-p''(x))\, .$$
This clearly simplifies many computations but this approximation seems not to have a sound justification. This is one further reason to stick to
a {\it first order approximation} of $h(p)$.\endproof}\end{remark}

Let $F$ be the force in \eq{force}, then by taking into account all the energy variations previously found we obtain the following equations of
motion:
\begin{eqnarray}
m\sr\ddot p&=& \frac{H_0}{\sr^2}p''(x)-\frac{2H_0\, s'(x)s''(x)}{\sr^4}p'(x)-h(p)+F(w-p)-Mg\,, \label{onecable}\\
M\ddot w&=& -EIw''''-F(w-p)+Mg\,. \label{onedeck}
\end{eqnarray}
We emphasize that the term $mg\sr$ has disappeared in \eq{onecable} due to its appearance with the opposite sign in \eq{acca2} and in the gravitational energy.

\subsection{A model for the suspension bridge}\label{energybalance}

In this subsection we consider the full suspension bridge. We view the deck as a degenerate plate, namely a beam representing the midline
of the roadway with cross sections which are free to rotate around the
beam. The midline corresponds to the barycenter of the cross sections, which are seen as rods that can rotate around
their barycenter, the angle of rotation with respect to the horizontal position is denoted by $\theta$. The endpoints of the cross sections are the edges of
the plate and they are connected to the sustaining cables through the hangers.
Then, for small $\theta$, the positions of the two free edges of the deck are given by
$$w=y\pm\ell\sin\theta\approx y\pm\ell\theta\, .$$

Each free edge of the deck is connected to a sustaining cable through hangers. We derive here the equation of
this cable-hangers-deck system and, in order to derive the Euler-Lagrange equations, we start from the
cable-hangers-beam model developed in the previous subsections.\par
We denote by $p_1(x,t)$ and $p_2(x,t)$ the displacements of the cables from their equilibrium position $s(x)$, see \eq{eqparabola}:
hence, $p_i(x,t)-s(x)$ denotes the actual position of the $i$-th cable ($i=1,2$). We assume that the roadway has length $L$ and width
$2\ell$ with $2\ell\ll L$.\par\noindent
$\bullet$ {\bf Kinetic energy of the deck.} This energy is composed by two terms, the first corresponding to the kinetic energy of the barycenter
of the cross section, see \eq{kinetikbeam}, the second corresponding to the kinetic energy of the torsional angle.
The kinetic energy of a rotating object is $\frac12J\dot{\theta}^2$, where $J$ is the moment of inertia and
$\dot{\theta}$ is the angular velocity. The moment of inertia of a rod of length $2\ell$ about the perpendicular axis through its center
is given by $\frac{1}{3}M\ell^2dx$ where $Mdx$ is the mass of the rod. Hence, the kinetic energy of a rod having mass $Mdx$ and half-length
$\ell$, rotating about its center with angular velocity $\dot{\theta}$, is given by $\frac{Mdx}{6}\ell^2\dot{\theta}^2$.
Therefore, the kinetic energy of the deck is given by
\neweq{kineticdeck}
E_{kd}=\int_0^L\left(\frac{M}{6}\ell^2\dot\theta^2+\frac{M}{2}\dot y^2\right)\, dx\, .
\endeq
Note that now each cable sustains the weight of half deck, so the boundary value problem \eq{eqparabola} for $s(x)$ becomes instead
$$
H_0s''(x)=\left(\frac{M}{2}+m\sr\right)\, g\,,\qquad s(0)=s(L)=s_0\,.
$$
\noindent
$\bullet$ {\bf Kinetic energy of the cables.} We adopt \eq{kinetikcable} for both cables and we get
\neweq{kineticclables}
E_{kc}=\frac{m}{2}\int_0^L(\dot p_1^2+\dot p_2^2)\, \sr\, dx\, .
\endeq\noindent
$\bullet$ {\bf Stiffening energy of the deck.} It is composed by two terms, the bending energy of the central beam and the torsional energy.
As in \eq{benden}, we have
\neweq{stiffdeck}
E_B=\frac{EI}{2} \int_0^L (y'')^{2}\, dx\, ,
\endeq
whereas the torsional stiffness of the deck is computed in terms of the derivative of the torsional angle:
$$
E_T=\frac{GK}{2} \int_0^L(\theta')^{2}\, dx\, .
$$
\noindent
$\bullet$ {\bf Gravitational energy of the deck and cables.} These are readily computed and read
$$
-Mg\int_0^L y\,dx\qquad\mbox{and}\qquad-mg\int_0^L (p_1+p_2)\sr\,dx\,.
$$
$\bullet$ {\bf Elastic energy in the hangers.} Following \eq{elenhan}, the energies of the two rows of hangers are
$$
E_{h_1}(y+\ell\theta-p_1)=\frac12 \int_0^L \kappa(x)\bigg(\Big(y(x,t)+\ell\theta(x,t)-p_1(x,t)+s(x)-\lambda(x)\Big)^+\bigg)^2\, dx\,,
$$
$$
E_{h_2}(y-\ell\theta-p_2)=\frac12 \int_0^L \kappa(x)\bigg(\Big(y(x,t)-\ell\theta(x,t)-p_2(x,t)+s(x)-\lambda(x)\Big)^+\bigg)^2\, dx\,.
$$
Since the weight $Mg$ of the deck is now supported by two cables, each one supporting $Mg/2$, the force $F$ in \eq{force} acts now on the two edges of the deck
and this gives rise to the two forces:
\begin{eqnarray*}
F(y+\ell\theta-p_1) &=& \kappa(x)\Big(y(x,t)+\ell\theta(x,t)-p_1(x,t)+s(x)-\lambda(x)\Big)^+\\
\ &=& \kappa(x)\Big(y(x,t)+\ell\theta(x,t)-p_1(x,t)+\frac{Mg}{2\kappa(x)}\Big)^+\, ,\\
F(y-\ell\theta-p_2) &=& \kappa(x)\Big(y(x,t)-\ell\theta(x,t)-p_2(x,t)+s(x)-\lambda(x)\Big)^+\\
\ &=& \kappa(x)\Big(y(x,t)-\ell\theta(x,t)-p_2(x,t)+\frac{Mg}{2\kappa(x)}\Big)^+\, ,
\end{eqnarray*}
with $F(0)=Mg/2$. Moreover, instead of \eq{gMk} we have
$$\frac{Mg}{2}=\kappa(x)\Big(s(x)-\lambda(x)\Big)=\frac{\kappa_0\Big(s(x)-\lambda(x)\Big)}{\lambda(x)}\, .$$
\par\medskip\noindent
$\bullet$ {\bf The Euler-Lagrange equations.} Let $h(p)$ be as in \eq{accadip}, then the equations of motion are
\begin{align}
\label{cable1} m\sr\, \ddot p_1&=\frac{H_0}{\sr^2}p_1''-\frac{2H_0\, s'(x)s''(x)}{\sr^4}p_1'-h(p_1)+F(y+\ell\theta-p_1)-\frac{Mg}{2}\\
\label{cable2} m\sr\, \ddot p_2&=\frac{H_0}{\sr^2}p_2''-\frac{2H_0\, s'(x)s''(x)}{\sr^4}p_2'-h(p_2)+F(y-\ell\theta-p_2)-\frac{Mg}{2}\\
\label{decky}M\ddot y&=-EIy''''-F(y+\ell\theta-p_1)-F(y-\ell\theta-p_2)+Mg\\
\label{decktheta}\frac{M}{3}\ell^2\ddot \theta&=GK\theta''+\ell\, F(y-\ell\theta-p_2)-\ell\, F(y+\ell\theta-p_1).
\end{align}
$\bullet$ {\bf Boundary conditions.} The degenerate plate is assumed to be hinged between the two towers while it is clear that the cross
sections between the towers are fixed and cannot rotate. This results in the boundary conditions
\neweq{bc}
y(0,t)=y(L,t)=y''(0,t)=y''(L,t)=\theta(0,t)=\theta(L,t)=0\quad\forall t\ge0\, .
\endeq
Since $p_i-s$ denotes the actual position of the cables, we also have
\neweq{bc2}
p_i(0,t)=p_i(L,t)=0\quad\forall t\ge0\, ,\qquad(i=1,2).
\endeq

\section{Existence and uniqueness for the suspension bridge system}\label{wellpos}

\subsection{Definition of solution}\label{basi}

Up to scaling, we may assume that $L=\pi$: this will simplify the spectral decomposition of the differential operator.
We endow the Hilbert spaces $L^2(0,\pi)$, $H^1_0(0,\pi)$, $H^2\cap H^1_0(0,\pi)$ with, respectively, the scalar products
$$(u,v)_2=\int_0^\pi uv\ ,\quad(u,v)_{H^1}=\int_0^\pi u'v'\ ,\quad(u,v)_{H^2}=\int_0^\pi u''v''\ .$$
Then an orthogonal basis in these three spaces is given by $\{e_k\}_{k=1}^\infty$, where
$$e_k(x)=\sqrt{\frac2\pi}\, \sin(kx)\, ,\quad\|e_k\|_2=1\, ,\quad\|e_k\|_{H^1}=k\, ,\quad\|e_k\|_{H^2}=k^2\, .$$
We denote by $H^*(0,\pi)$ the dual space of $H^2\cap H^1_0(0,\pi)$ and by $\langle\cdot,\cdot\rangle_*$ the duality pairing, whereas we denote by
$H^{-1}(0,\pi)$ the dual space of $H^1_0(0,\pi)$ and by $\langle\cdot,\cdot\rangle_1$ the duality pairing.\par
Let $\xi$ be as in \eq{xi}; we also consider the Sturm-Liouville eigenvalue problem
\neweq{eigenxi}
-\left(\frac{H_0}{\sr^2}\, u'\right)'=\lambda \sr u\quad\mbox{ in }(0,\pi)\, ,\qquad u(0)=u(\pi)=0\, .
\endeq
We say that $\lambda$ is an eigenvalue of \eq{eigenxi} if there exists $u\neq0$ (eigenfunction) such that
$$\int_0^\pi \frac{H_0}{\sr^2}\, u'v'=\lambda\int_0^\pi \sr uv\qquad\forall v\in H^1_0(0,\pi)\, .$$
It is well-known (see e.g.\ \cite[\S 2.4]{algwaiz}) that \eq{eigenxi} admits a sequence of positive eigenvalues $\{\lambda_k\}$ which diverges to infinity.
Each eigenvalue has multiplicity 1 and the sequence of eigenfunctions $\{u_k\}$ is an orthogonal basis of $L^2(0,\pi)$ and of $H^1_0(0,\pi)$ endowed
with the scalar products
$$(u,v)_\xi=\int_0^\pi\sr\, uv\qquad\forall u,v\in L^2_\xi(0,\pi)\, ,\qquad(u,v)_{H^1_\xi}=\int_0^\pi \frac{H_0}{\sr^2}\, u'v'\qquad
\forall u,v\in H^1_\xi(0,\pi)\, .$$
Let us emphasize that the corresponding norms $\|\cdot\|_\xi$ and $\|\cdot\|_{H^1_\xi}$ are equivalent, respectively, to the norms $\|\cdot\|_2$
and $\|\cdot\|_{H^1}$ but, for the sake of clarity, we maintain the different notations $L^2_\xi$ and $H^1_\xi$. In the sequel, we assume that $\{u_k\}$
is normalized in $L^2_\xi$, that is,
$$\|u_k\|_\xi=1\, ,\quad\|u_k\|_{H^1_\xi}=\sqrt{\lambda_k}\, .$$
We denote by $H^\xi(0,\pi)$ the dual space of $H^1_\xi(0,\pi)$ and by $\langle\cdot,\cdot\rangle_\xi$ the corresponding duality pairing.\par
We set
$$
\Phi(s):=F(s)-\frac{Mg}{2}\qquad\mbox{and}\qquad\Psi(s)=\int_0^s\Phi(\sigma)\, d\sigma
$$
so that
$$
\Phi(0)=\Psi(0)=0\ ,\qquad\Phi(s)\ge0\, ,\ \Psi(s)\ge0\quad \forall s\in\R\, .
$$

In the above functional-analytic setting and with this simplification, the equations \eq{cable1}-\eq{cable2}-\eq{decky}-\eq{decktheta}
may be rewritten as
\neweq{primordine}
\left\{\begin{array}{ll}
m\sr\, \ddot p_1&=\left[\frac{H_0}{\sr^2}p_1'\right]'+\frac{AE}{L_c}\left[\int_0^\pi\frac{s'(x)p_1'(x)}{\sr}dx\right]\frac{s''(x)}{\sr^3}+\Phi(y+\ell\theta-p_1)\\
m\sr\, \ddot p_2&=\left[\frac{H_0}{\sr^2}p_2'\right]'+\frac{AE}{L_c}\left[\int_0^\pi\frac{s'(x)p_2'(x)}{\sr}dx\right]\frac{s''(x)}{\sr^3}+\Phi(y-\ell\theta-p_2)\\
M\ddot y&=-EIy''''-\Phi(y+\ell\theta-p_1)-\Phi(y-\ell\theta-p_2)\\
\frac{M}{3}\ell^2\ddot \theta&=GK\theta''+\ell\Phi(y-\ell\theta-p_2)-\ell\Phi(y+\ell\theta-p_1)
\end{array}\right.
\endeq
where $\sr$ and $s(x)$ are linked through \eq{xi}. We add the boundary conditions \eq{bc}-\eq{bc2} which we rewrite for $L=\pi$:
\neweq{bcpi}
y(0,t)=y(\pi,t)=y''(0,t)=y''(\pi,t)=p_i(0,t)=p_i(\pi,t)=\theta(0,t)=\theta(\pi,t)=0\quad\forall t\ge0\, .
\endeq
We fix some initial data at time $t=0$,
\neweq{initial}
\begin{array}{cc}
y(x,0)=y^0(x)\,,\quad p_i(x,0)=p_i^0(x)\,,\quad \theta(x,0)=\theta^0(x)\qquad\forall x\in(0,\pi)\\
\dot{y}(x,0)=y^1(x)\,,\quad\dot{p}_i(x,0)=p_i^1(x)\,,\quad\dot\theta(x,0)=\theta^1(x)\qquad\forall x\in(0,\pi)\, ,
\end{array}
\endeq
with the following regularity
\neweq{reginitial}
y^0\in H^2\cap H^1_0(0,\pi)\, ,\quad\theta^0,p_i^0\in H^1_0(0,\pi)\, ,\quad y^1,\theta^1,p_i^1\in L^2(0,\pi)\, .
\endeq

We say that $(p_1,p_2,y,\theta)$ is a weak solution of \eq{primordine} if
\neweq{regularity}
\begin{array}{cc}
\displaystyle\theta,p_i\in C^0\Big([0,T];H^1_0(0,\pi)\Big)\cap C^1\Big([0,T];L^2(0,\pi)\Big)\cap C^2\Big([0,T];H^{-1}(0,\pi)\Big)\\
y\in C^0\Big([0,T];H^2\cap H^1_0(0,\pi)\Big)\cap C^1\Big([0,T];L^2(0,\pi)\Big)\cap C^2\Big([0,T];H^*(0,\pi)\Big)
\end{array}\endeq
and if the following equations are satisfied
$$
\left\{\begin{array}{l}
m\langle\ddot p_1,v_1\rangle_\xi+(p_1,v_1)_{H^1_\xi}=\frac{AE}{L_c}\left[\int_0^\pi\frac{s'(x)p_1'(x)}{\sr}dx\right]\left(\frac{s''(x)}{\sr^3},v_1\right)_2
+\Big(\Phi(y+\ell\theta-p_1),v_1\Big)_2\\
m\langle\ddot p_2,v_2\rangle_\xi+(p_2,v_2)_{H^1_\xi}=\frac{AE}{L_c}\left[\int_0^\pi\frac{s'(x)p_2'(x)}{\sr}dx\right]\left(\frac{s''(x)}{\sr^3},v_2\right)_2
+\Big(\Phi(y-\ell\theta-p_2),v_2\Big)_2\\
M\langle\ddot y,w\rangle_*+EI(y,w)_{H^2}=-\Big(\Phi(y+\ell\theta-p_1)+\Phi(y-\ell\theta-p_2),w\Big)_2\\
\frac{M}{3}\ell^2\langle\ddot \theta,v_3\rangle_1+GK(\theta,v_3)_{H^1}=
\Big(\ell\Phi(y-\ell\theta-p_2)-\ell\Phi(y+\ell\theta-p_1),v_3\Big)_2
\end{array}\right.
$$
for all $v_i\in H^1_0(0,\pi)$, $w\in H^2\cap H^1_0(0,\pi)$ and $t>0$. In the sequel we denote by $X_T$ the functional space of solutions:
\neweq{X}
(p_1,p_2,y,\theta)\in X_T\ \Longleftrightarrow\ \mbox{\eq{regularity} holds.}
\endeq
Note that this space already includes the boundary conditions \eq{bcpi}: hence, from now on we will not mention further \eq{bcpi}.\par
In view of the energy balance performed in Section \ref{energybalance}, the conserved (and approximated at second order) total energy
of any solution $(p_1,p_2,y,\theta)\in X_T$ of \eq{primordine} is given by
\begin{eqnarray}
E(t) &=& \int_0^\pi\left(\frac{M}{6}\ell^2\dot\theta^2+\frac{M}{2}\dot y^2+\frac{m}{2}(\dot p_1^2+\dot p_2^2)\, \sr\right)\, dx
+\int_0^\pi\left(\frac{EI}{2}(y'')^{2}+\frac{GK}{2}(\theta')^{2}\right)\, dx \notag \\
\ & \ &+\frac{AE}{2L_c}\left(\int_0^\pi \frac{s'\, p_1'}{\sr}\,dx\right)^2+\frac{AE}{2L_c}\left(\int_0^\pi \frac{s'\, p_2'}{\sr}\,dx\right)^2
+H_0\int_0^\pi\left(\frac{(p_1')^2+(p_2')^2}{2\sr^2}-s'\, (p_1'+p_2')\right)\,dx \notag \\
\ & \ & -mg\int_0^\pi(p_1+p_2)\sr\,dx+\int_0^\pi\Big(\Psi(y+\ell\theta-p_1)+\Psi(y-\ell\theta-p_2)\Big)\, dx \label{conserved}
\end{eqnarray}
where, in the first line we see the total kinetic energy of the bridge and the elastic energy of the deck whereas in the second line we see the stretching
energy of the two cables. The third line in \eq{conserved} deserves a particular attention: we see there the gravitational energies of the
cables and the deck, the latter being included in $\Psi$ while the former cancels in the equations \eq{primordine} due to its presence with an opposite
sign in both \eq{acca2} and in the gravitational energy.

\subsection{Existence and uniqueness for a related linear problem}

We consider here the linear and decoupled problem
\neweq{lineare}
\left\{\begin{array}{ll}
m\sr\, \ddot p_1&=\left[\frac{H_0}{\sr^2}p_1'\right]'+g_1(x,t)\\
m\sr\, \ddot p_2&=\left[\frac{H_0}{\sr^2}p_2'\right]'+g_2(x,t)\\
M\ddot y&=-EIy''''+g_3(x,t)\\
\frac{M}{3}\ell^2\ddot \theta&=GK\theta''+g_4(x,t)
\end{array}\right.\qquad\qquad x\in(0,\pi)\, ,\ t>0\, .
\endeq
where $g_j\in C^0([0,\pi]\times[0,T])$ for $j=1,...,4$. To \eq{lineare} we associate the initial conditions \eq{initial} with the regularity
as in \eq{reginitial}. We say that $(p_1,p_2,y,\theta)\in X_T$ is a weak solution of \eq{lineare} if
$$
\left\{\begin{array}{l}
m\langle\ddot p_1,v\rangle_\xi+(p_1,v)_{H^1_\xi}= (g_1,v)_2\\
m\langle\ddot p_2,v\rangle_\xi+(p_2,v)_{H^1_\xi}= (g_2,v)_2\\
M\langle\ddot y,w\rangle_*+EI(y,w)_{H^2}=(g_3,w)_2\\
\frac{M}{3}\ell^2\langle\ddot \theta,v\rangle_1+GK(\theta,v)_{H^1}=(g_4,v)_2
\end{array}\right.\qquad\forall v\in H^1_0(0,\pi)\, ,\ \forall w\in H^2\cap H^1_0(0,\pi)\, ,\ t>0\, .
$$

The purpose of the present section is to prove the following statement.

\begin{theorem}\label{wplinear}
Let $T>0$ and let $g_j\in C^0([0,\pi]\times[0,T])$ for $j=1,...,4$. For all $y^0,\theta^0,p_i^0,y^1,\theta^1,p_i^1$ satisfying \eqref{reginitial} there
exists a unique weak solution $(p_1,p_2,y,\theta)\in X_T$ of \eqref{lineare} satisfying \eqref{initial}.
\end{theorem}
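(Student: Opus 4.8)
The plan is to exploit the fact that the four equations in \eqref{lineare} are mutually decoupled and each is linear, so that they may be solved one at a time by the spectral (Faedo--Galerkin) method, using for each equation the orthogonal basis that diagonalizes its spatial operator. For the two cable equations I would use the Sturm--Liouville eigenfunctions $\{u_k\}$ of \eqref{eigenxi}, which are orthonormal in $L^2_\xi$ and orthogonal in $H^1_\xi$ with $\|u_k\|_{H^1_\xi}^2=\lambda_k$; for the torsion equation and the fourth-order deck equation I would use $\{e_k\}$, which is orthogonal in $L^2$, $H^1_0$ and $H^2\cap H^1_0$ with $\|e_k\|_{H^1}^2=k^2$ and $\|e_k\|_{H^2}^2=k^4$. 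These are precisely the bases introduced before the statement, and they are tailored so that the weak formulation decouples completely across modes.

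Next I would insert the expansions $p_i=\sum_k c^i_k(t)u_k$, $\theta=\sum_k d_k(t)e_k$, $y=\sum_k a_k(t)e_k$ into the weak formulation and test against a single basis element. By orthogonality this turns each equation into an infinite family of uncoupled forced harmonic oscillators: the deck equation gives $M\ddot a_k+EI\,k^4 a_k=(g_3(\cdot,t),e_k)_2$, the torsion equation gives $\frac{M}{3}\ell^2\ddot d_k+GK\,k^2 d_k=(g_4(\cdot,t),e_k)_2$, and each cable equation gives $m\ddot c^i_k+\lambda_k c^i_k=(g_i(\cdot,t),u_k)_2$. The data \eqref{initial} fix $c^i_k(0),a_k(0),d_k(0)$ and their time derivatives as the Fourier coefficients of the functions in \eqref{reginitial}, and each scalar ODE has a unique $C^2$ solution given explicitly by Duhamel's formula, i.e. a combination of $\cos(\omega_k t)$, $\sin(\omega_k t)$ and a convolution of $\sin(\omega_k(t-\tau))$ against the continuous modal forcing, where $\omega_k$ is the modal frequency. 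This already settles uniqueness: any weak solution in $X_T$, tested against $u_k$ or $e_k$, must have modal coefficients solving these same ODEs with the same initial data, hence coincides with the constructed one.

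The technical heart is showing that the reconstructed series lie in the regularity class \eqref{regularity}. The decisive tool is a Parseval/energy bound. For the deck, since $\omega_k^2=\frac{EI}{M}k^4$, the homogeneous part of $k^4 a_k(t)^2$ is controlled by $k^4 a_k(0)^2+\frac{M}{EI}\,k^4\frac{\dot a_k(0)^2}{\omega_k^2}$, whose sum is bounded by $\|y^0\|_{H^2}^2+\frac{M}{EI}\|y^1\|_2^2$, while the Duhamel part, after Cauchy--Schwarz in $\tau$, contributes at most $\frac{T}{M\,EI}\int_0^T\|g_3(\cdot,\tau)\|_2^2\,d\tau$ to $\sum_k k^4 a_k(t)^2$. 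Hence $\sum_k k^4 a_k(t)^2$ is finite and, its tails being dominated uniformly in $t$, the series converges in $C^0([0,T];H^2\cap H^1_0)$; differentiating the explicit formulas, the same argument gives $\dot y\in C^0([0,T];L^2)$. The analogous computations for $\theta$ (weight $k^2$) and for the cables (weight $\lambda_k$, using the equivalence of $\|\cdot\|_\xi,\|\cdot\|_{H^1_\xi}$ with $\|\cdot\|_2,\|\cdot\|_{H^1}$) go through verbatim.

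Finally, the second-order-in-time regularity is read off the ODEs themselves: solving $M\ddot a_k=-EI\,k^4 a_k+(g_3,e_k)_2$ for $\ddot a_k$ and measuring in the dual norm $\|\sum_k b_k e_k\|_{H^*}^2\simeq\sum_k k^{-4}b_k^2$ gives $\sum_k k^{-4}\ddot a_k(t)^2\lesssim\sum_k k^4 a_k(t)^2+\|g_3(\cdot,t)\|_2^2$, which is finite and continuous in $t$, so $\ddot y\in C^0([0,T];H^*)$; likewise $\ddot p_i\in C^0([0,T];H^\xi)$ and $\ddot\theta\in C^0([0,T];H^{-1})$. Collecting these facts shows $(p_1,p_2,y,\theta)\in X_T$ and, by construction, the weak formulation holds against every basis element and hence, by density and linearity, against every admissible test function. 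I expect the only delicate point to be the bookkeeping of these convergence estimates — in particular the continuity in $t$ of the energy-norm series and the passage to the dual spaces for the second time derivatives — since the mechanism (Duhamel plus Parseval) is entirely standard once the correct basis has been chosen for each equation.
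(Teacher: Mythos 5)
Your proof is correct, and its skeleton coincides with the paper's: you use the same two bases $\{u_k\}$, $\{e_k\}$, and the same exact modal decoupling --- your scalar oscillator ODEs are precisely the system \eqref{ntests} arising in the paper's Galerkin scheme, whose coefficients, as the paper observes, do not depend on the truncation level $n$. The genuine differences lie in how the argument is closed. For the bounds and the convergence of the series, the paper never solves the modal ODEs: it tests the truncated problem \eqref{approxsyst} with $\dot y_n$ (and its analogues), closes the resulting energy inequality with the Gronwall-type Lemma \ref{gronwall}, and then shows the approximants form a Cauchy sequence in $C^0([0,T];H^2\cap H^1_0)\cap C^1([0,T];L^2)$ by estimating the difference $y_m-y_n$ against the $L^2$-tail of the Fourier expansion of $g_3$ (its \eqref{converge}--\eqref{stimacauchy}); your Duhamel-plus-Parseval tail domination is the explicit-formula counterpart of that Cauchy argument and gives the same conclusion more elementarily, at the price of relying on explicit solvability (linear equations, time-independent coefficients, bases that diagonalize exactly). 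For uniqueness the paper subtracts two solutions and invokes energy conservation for the resulting homogeneous problem, whereas you project an arbitrary weak solution onto each basis vector and invoke scalar ODE uniqueness; both are valid, and yours avoids justifying the energy identity for weak solutions, but it does require the (true, and worth stating) fact that $t\mapsto (y(t),e_k)_2$ is $C^2$ with second derivative $\langle\ddot y(t),e_k\rangle_*$, which follows from the $C^2([0,T];H^*)$ regularity built into $X_T$. One small repair to your final step: pointwise-in-$t$ finiteness of $\sum_k k^{-4}\ddot a_k(t)^2$ does not by itself give \emph{continuity} of $\ddot y$ into $H^*$; argue instead, as the paper does in its Step 4, that the equation expresses $M\ddot y$ as the sum of the functional $w\mapsto -EI(y,w)_{H^2}$ and of $g_3$, each of which depends continuously on $t$ in $H^*$ because $y\in C^0([0,T];H^2\cap H^1_0)$ and $g_3\in C^0([0,\pi]\times[0,T])$. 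What your route buys is brevity and explicitness; what the paper's energy--Galerkin route buys is robustness --- it does not use solvability of the modal problems, it is the template reused in the nonlinear fixed-point proof of Theorem \ref{wp}, and, as the paper notes, it doubles as a numerical approximation scheme.
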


The proof of Theorem \ref{wplinear} uses a Galerkin procedure and is divided in several steps; it allows to emphasize the delicate role of all the
spaces, norms, and scalar products defined in Section \ref{basi}. Moreover, the proof provides the underlying idea for a numerical approximation
of the dynamics with a finite number of modes.\par\noindent
{\bf \underline{Step 1}.} We construct a sequence of solutions of approximated problems in finite dimensional spaces.\par
We consider the two basis defined in Section \ref{basi} and, for any $n\ge1$, we put
$$E_n:={\rm span }\{e_1,\dots,e_n\}\, ,\qquad U_n:={\rm span }\{u_1,\dots,u_n\}\, .$$
For any $n\ge1$ we also put
$$(p_i^0)_n:=\sum_{k=1}^n (p_i^0,u_k)_\xi\, u_k=-\sum_{k=1}^n \lambda_k^{-1/2}(p_i^0,u_k)_{H^1_\xi}\, u_k\, ,
\qquad(p_i^1)_n:=\sum_{k=1}^n (p_i^1,u_k)_\xi\, u_k\, ,$$
$$y^0_n:=\sum_{k=1}^n (y^0,e_k)_2\, e_k=\sum_{k=1}^n k^{-2}(y^0,e_k)_{H^2}\, e_k\, ,\qquad y^1_n:=\sum_{k=1}^n (y^1,e_k)_2\, e_k\, ,$$
$$\theta^0_n:=\sum_{k=1}^n (\theta^0,e_k)_2\, e_k=-\sum_{k=1}^n k^{-1}(\theta^0,e_k)_{H^1}\, e_k\, ,\qquad \theta^1_n:=\sum_{k=1}^n (\theta^1,e_k)_2\, e_k\, ,$$
so that
\neweq{convinitial}
y^0_n\to y^0\mbox{ in }H^2\, ,\quad\theta^0_n\to\theta^0\, ,\ (p_i^0)_n\to p_i^0\mbox{ in }H^1\, ,\quad
y^1_n\to y^1\, ,\ \theta^1_n\to\theta^1\, ,\ (p_i^1)_n\to p_i^1\mbox{ in }L^2
\endeq
as $n\to\infty$. For any $n\ge 1$ we seek $((p_1)_n,(p_2)_n,y_n,\theta_n)$ such that
$$(p_i)_n(x,t)=\sum_{k=1}^n (p_i)_n^k(t)u_k\, ,\qquad y_n(x,t)=\sum_{k=1}^n y_n^k(t)e_k\, ,\qquad\theta_n(x,t)=\sum_{k=1}^n \theta_n^k(t)e_k$$
and solving the variational problem
\neweq{approxsyst}
\left\{\begin{array}{l}
m((\ddot p_1)_n,v)_\xi+((p_1)_n,v)_{H^1_\xi}= (g_1,v)_2\\
m((\ddot p_2)_n,v)_\xi+((p_2)_n,v)_{H^1_\xi}= (g_2,v)_2\\
M(\ddot y_n,w)_2+EI(y_n,w)_{H^2}=(g_3,w)_2\\
\frac{M}{3}\ell^2(\ddot \theta_n,w)_2+GK(\theta_n,w)_{H^1}=(g_4,w)_2
\end{array}\right.\qquad\forall v\in U_n\, ,\ \forall w\in E_n\, ,\ t>0\, .
\endeq
By making $n$ tests on each equation (for $v=u_1,...,u_n$ and $w=e_1,...,e_n$) we obtain the $4n$ linear equations
\neweq{ntests}
\left\{\begin{array}{l}
m(\ddot p_1)_n^k+\lambda_k(p_1)_n^k=(g_1,u_k)_2\\
m(\ddot p_2)_n^k+\lambda_k(p_2)_n^k=(g_2,u_k)_2\\
M\ddot y_n^k+EI\, k^2\, y_n^k=(g_3,e_k)_2\\
\frac{M}{3}\ell^2\ddot\theta_n^k+GK\, k\, \theta_n^k=(g_4,e_k)_2
\end{array}\right.\qquad\forall k=1,...,n\, .
\endeq
It is a classical result from the theory of linear ODE's that this finite-dimensional linear system, together with the initial conditions
$$
(p_i)_n^k(0)=(p_i^0,u_k)_\xi\, ,\ (\dot{p}_i)_n^k(0)=(p_i^1,u_k)_\xi\, ,\ y_n^k(0)=(y^0,e_k)_2\, ,\ \dot{y}_n^k(0)=(y^1,e_k)_2\, ,
$$
$$
\theta_n^k(0)=(\theta^0,e_k)_2\, ,\ \dot{\theta}_n^k(0)=(\theta^1,e_k)_2\, ,
$$
admits a unique global solution defined for all $t>0$.\par\noindent
{\bf \underline{Step 2}.} In this step we prove a local uniform bound for the sequence $\{((p_1)_n,(p_2)_n,y_n,\theta_n)\}$.\par
We fix some (finite) $T>0$; in what follows the $c_i$'s denote positive constants independent of $n$, possibly depending on $T$.
We take $w=\dot y_n$ in \eq{approxsyst}$_3$ and we integrate first over $(0,\pi)$ and then over $(0,t)$ for some $t\in(0,T)$ to obtain
\begin{eqnarray}
M\, \|\dot y_n(t)\|_2^2+EI\, \|y_n(t)\|_{H^2}^2 &=& c_1+2\int_0^t(g_3,\dot y_n)_2\le
c_1+2\|g_3\|_\infty\int_0^t\|\dot y_n(\tau)\|_1\, d\tau \notag\\
&\le& c_1+c_2\int_0^t\|\dot y_n(\tau)\|_1\, d\tau \label{stima1}
\end{eqnarray}
where $\|\cdot\|_\infty$ denotes the $L^\infty([0,\pi]\times[0,T])$-norm whereas $\|\cdot\|_1$ denotes the $L^1(0,\pi)$-norm.
Here, $c_1:=\sup_n(M\|y^1_n\|_2^2+EI\|y^0_n\|_{H^2}^2)<\infty$. By using the H\"older inequality, we see that \eq{stima1} implies
\neweq{stima2}
\|\dot y_n(t)\|_2^2\le c_3+ c_4 \left(\int_0^t\|\dot y_n(\tau)\|_2^2\, d\tau\right)^{1/2}\, .
\endeq
In turn, \eq{stima2} may be written as
$$f'(t)\le c_3+c_4 \sqrt{f(t)}\qquad\forall t\in(0,T)\, ,\qquad f(t):=\int_0^t\|\dot y_n(\tau)\|_2^2\, d\tau\, .$$
We now recall a Gronwall-type lemma which can be deduced (e.g.) from \cite{tre} and \cite[Lemma A.5/p.157]{quattro}.

\begin{lemma}\label{gronwall}
Let $f\in C^1(\R_+)$ be such that $f(0)=0$, $0\le f'(t)\le C_1+C_2\, \sqrt{f(t)}$ for all $t\ge0$ (for some $C_1,C_2>0$). Then
$$f(t)\le\frac{(C_1+C_2)^2}{4}\, t^2+(C_1+C_2)\, t\qquad\forall t\ge0\, .$$
\end{lemma}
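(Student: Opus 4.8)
The plan is to reduce the nonlinear differential inequality to a purely \emph{linear} one by a single change of unknown which simultaneously removes the non-Lipschitz behaviour of $\sqrt{\cdot}$ at the origin and explains the precise form of the claimed quadratic bound. As a preliminary remark I would record that the hypotheses already force $f\ge0$: since $f'\ge0$ and $f(0)=0$, the function $f$ is non-decreasing, hence $f(t)\ge0$ for all $t\ge0$. In particular $\sqrt{f(t)}$ is well defined and $f(t)+1\ge1$, so the substitution below is legitimate.

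The central step is to set $\phi(t):=\sqrt{f(t)+1}$. Because $f+1\ge1>0$ and $f\in C^1(\R_+)$, the function $\phi$ is $C^1$ on $\R_+$ \emph{with no singularity}, and it satisfies $\phi(0)=1$ and $\phi(t)\ge1$. Differentiating gives $\phi'=\frac{f'}{2\phi}$. Now I would feed in the hypothesis together with the trivial bound $\sqrt f\le\sqrt{f+1}=\phi$, obtaining $f'\le C_1+C_2\sqrt f\le C_1+C_2\phi$; dividing by $2\phi>0$ yields
$$\phi'(t)\le\frac{C_1+C_2\phi(t)}{2\phi(t)}=\frac{C_1}{2\phi(t)}+\frac{C_2}{2}\le\frac{C_1+C_2}{2},$$
where the final inequality uses $\phi\ge1$. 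Note that this bound on $\phi'$ holds irrespective of the sign of $f'$, so no separate treatment is needed on intervals where $f$ might fail to increase.

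It then remains to integrate and undo the substitution. Integrating $\phi'\le\frac{C_1+C_2}{2}$ over $(0,t)$ and using $\phi(0)=1$ gives $\phi(t)\le1+\frac{C_1+C_2}{2}\,t$; since both sides are positive, squaring preserves the inequality and yields
$$f(t)+1=\phi(t)^2\le\left(1+\frac{C_1+C_2}{2}\,t\right)^2=1+(C_1+C_2)\,t+\frac{(C_1+C_2)^2}{4}\,t^2,$$
so subtracting $1$ produces exactly the asserted estimate. I do not expect any genuine obstacle in carrying this out; the only delicate point is conceptual rather than computational, namely the choice of substitution. A more pedestrian route would be to compare $f$ with the explicit supersolution $u(t):=\frac{(C_1+C_2)^2}{4}t^2+(C_1+C_2)t$ of the ODE $u'=C_1+C_2\sqrt u$, but such a comparison must contend with the non-Lipschitz square root at $0$ (typically via a regularisation $f+\eps$); shifting by $1$ inside the square root from the outset sidesteps this issue entirely and delivers the sharp constants directly.
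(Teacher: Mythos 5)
Your proof is correct: $f\ge 0$ follows from $f'\ge0$ and $f(0)=0$, so $\sqrt{f}$ is defined; $\phi=\sqrt{f+1}$ is $C^1$ because $f+1\ge1$ keeps you away from the singularity of the square root; the chain $\phi'=f'/(2\phi)\le C_1/(2\phi)+C_2/2\le(C_1+C_2)/2$ is valid since $\phi\ge1$; and integrating from $\phi(0)=1$ and squaring yields exactly $f(t)\le\frac{(C_1+C_2)^2}{4}t^2+(C_1+C_2)t$. Be aware, though, that the paper does not prove this lemma at all: it states it as a known Gronwall-type result to be ``deduced (e.g.)'' from Baiocchi \cite{tre} and Brezis \cite[Lemma A.5]{quattro}, so your argument is different in kind rather than in detail --- it replaces an appeal to the literature with a four-line self-contained computation. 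What your route buys is substantial: the shift by $1$ under the square root eliminates the non-Lipschitz behaviour of $\sqrt{\cdot}$ at the origin without the usual $\eps$-regularization $\sqrt{f+\eps}$, $\eps\to0$ (the device behind the classical proofs such as Brezis's Lemma A.5), and it transparently accounts for the exact constants in the statement: the quadratic term is the square of the integrated slope $\tfrac{C_1+C_2}{2}t$, while the linear term $(C_1+C_2)t$ is the cross term generated by the initial value $\phi(0)=1$. The only losses are the harmless overestimates $C_1/(2\phi)\le C_1/2$ and $\sqrt f\le\phi$, which is precisely why the bound in the lemma is not sharp for the ODE $u'=C_1+C_2\sqrt u$ but matches the stated inequality verbatim. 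Your closing remark is also apt: a direct comparison with the supersolution $u(t)=\frac{(C_1+C_2)^2}{4}t^2+(C_1+C_2)t$ would indeed founder on the failure of uniqueness/comparison for the non-Lipschitz square root at $0$ unless regularized, so the substitution is not merely elegant but load-bearing.
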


By applying Lemma \ref{gronwall} to \eq{stima2} and going back to \eq{stima1}, we obtain constants $c_5,c_6>0$ (independent of $n$) such that
\neweq{boundy}
\|\dot y_n(t)\|_2^2+\|y_n(t)\|_{H^2}^2\le c_5+c_6\, t^2\qquad\forall t\in(0,T)\ .
\endeq

Similarly, take $v=(\dot p_1)_n$ in \eq{approxsyst}$_1$ or $v=(\dot p_2)_n$ in \eq{approxsyst}$_2$, to obtain constants $c_7,c_8,c_9,c_{10}>0$
(independent of $n$) such that
$$
\|(\dot p_1)_n(t)\|_\xi^2+\|(p_1)_n(t)\|_{H^1_\xi}^2\le c_7+c_8\, t^2\qquad\forall t\in(0,T)\ ,
$$
$$
\|(\dot p_2)_n(t)\|_\xi^2+\|(p_2)_n(t)\|_{H^1_\xi}^2\le c_9+c_{10}\, t^2\qquad\forall t\in(0,T)\ .
$$
In order to obtain these inequalities one also needs to combine the H\"older inequality with the equivalence of the norms in $L^2$ and $L^2_\xi$.\par
Finally, we take $w=\dot \theta_n$ in \eq{approxsyst}$_4$. Proceeding as above, we obtain constants $c_{11},c_{12}>0$ (independent of $n$) such that
$$
\|\dot \theta_n(t)\|_2^2+\|\theta_n(t)\|_{H^1}^2\le c_{11}+c_{12}\, t^2\qquad\forall t\in(0,T)\ .
$$
{\bf \underline{Step 3}.} We show that $\{((p_1)_n,(p_2)_n,y_n,\theta_n)\}$ admits a strongly convergent subsequence in some norm.\par
Let us consider in detail the sequence $\{y_n\}$, the other components being similar. The bound in \eq{boundy} suggests to prove strong convergence in the space
$$C^0([0,T];H^2_*(0,\pi))\cap C^1([0,T];L^2(0,\pi))\, .$$
The equations in \eq{ntests} show that the components $y_n^k$ do not depend on $n$, that is,
$$y_n(x,t)=\sum_{k=1}^n y^k(t)e_k\, .$$
Take some $m>n\ge1$ and define
$$y_{m,n}(x,t):=y_m(x,t)-y_n(x,t)=\sum_{k=n+1}^m y^k(t)e_k$$
so that, in particular,
$$
y_{m,n}(x,0)=y^0_m-y^0_n\, ,\quad \dot y_{m,n}(x,0)=y^1_m-y^1_n\, .
$$
Consider also the Fourier decomposition of the function $g_3$:
$$g_3(x,t)=\sum_{k=1}^\infty g_3^k(t)e_k\, ;$$
since $g_3\in C^0([0,\pi]\times[0,T])\subset L^2((0,\pi)\times(0,T))$, we know that
\neweq{converge}
\left(\sum_{k=n+1}^\infty g_3^k(t)^2\right)^{1/2}\to0\quad\mbox{as }n\to\infty\qquad\mbox{in }L^2(0,T)\, .
\endeq

Rewrite \eq{approxsyst}$_3$ with $n$ replaced by $m$. Then by taking $w=\dot y_{m,n}(t)$ as a test function, by using the orthogonality of the
system $\{e_k\}$, and by integrating over $(0,t)$ we obtain
\begin{eqnarray}
M\|\dot y_{m,n}(t)\|_2^2+EI\|y_{m,n}(t)\|_{H^2}^2 &=& C_{m,n}+2\int_0^t(g_3,\dot y_{m,n})_2=
C_{m,n}+2\int_0^t\left(\sum_{k=n+1}^m g^k(\tau)e_k,\dot y_{m,n}\right)_2 d\tau \notag\\
&\le& C_{m,n}+2\int_0^T \left(\sum_{k=n+1}^m g^k_3(t)^2\right)^{1/2}\|\dot y_{m,n}(t)\|_2 dt \label{stimacauchy}
\end{eqnarray}
where $C_{m,n}=M\|y^1_m-y^1_n\|_2^2+EI\|y^0_m-y^0_n\|_{H^2}^2$. By \eq{convinitial} we know that $C_{m,n}\to0$ as $m,n\to\infty$; combined with
\eq{converge} and \eq{stimacauchy}, this shows that $\{y_n\}$ is a Cauchy sequence in $C^0([0,T];H^2\cap H^1_0(0,\pi))\cap C^1([0,T];L^2(0,\pi))$.
By completeness of these spaces we conclude that
\neweq{convy}
\begin{array}{l}
\exists y\in C^0([0,T];H^2\cap H^1_0(0,\pi))\cap C^1([0,T];L^2(0,\pi))\mbox{ s.t. }\\
y_n\to y\quad\text{in }C^0([0,T];H^2\cap H^1_0(0,\pi))\cap C^1([0,T];L^2(0,\pi))\quad \text{as } n\to +\infty \, ,
\end{array}\endeq
thereby completing the proof of the claim.\par
For the sequences $\{\theta_n\}$, $\{(p_1)_n\}$, $\{(p_2)_n)\}$ we may proceed similarly and prove that
\neweq{convp}
\begin{array}{l}
\exists p_i\in C^0\Big([0,T];H^1_0(0,\pi)\Big)\cap C^1\Big([0,T];L^2(0,\pi)\Big)\mbox{ s.t. }\\
(p_i)_n\to p_i\quad\text{in }C^0\Big([0,T];H^1_0(0,\pi)\Big)\cap C^1\Big([0,T];L^2(0,\pi)\Big)\quad \text{as } n\to +\infty \, ,
\end{array}\endeq
\neweq{convtheta}
\begin{array}{l}
\exists\theta\in C^0\Big([0,T];H^1_0(0,\pi)\Big)\cap C^1\Big([0,T];L^2(0,\pi)\Big)\mbox{ s.t. }\\
\theta_n\to\theta\quad\text{in }C^0\Big([0,T];H^1_0(0,\pi)\Big)\cap C^1\Big([0,T];L^2(0,\pi)\Big)\quad \text{as } n\to +\infty\, .
\end{array}\endeq
{\bf \underline{Step 4}.} We take the limit in \eqref{approxsyst} and we prove Theorem \ref{wplinear}.\par
By using \eq{convy}-\eq{convp}-\eq{convtheta} in \eqref{approxsyst} we see that $(p_1,p_2,y,\theta)$ is a weak solution of \eq{lineare}
satisfying \eqref{initial}, with the additional regularity
$$
\theta,p_i\in C^2\Big([0,T];H^{-1}(0,\pi)\Big)\, ,\quad y\in C^2\Big([0,T];H^*(0,\pi)\Big)
$$
following from the equations \eq{lineare}. Therefore, for any $T>0$ we have proved the existence of a weak solution $(p_1,p_2,y,\theta)\in X_T$ of
\eq{lineare} over the interval $(0,T)$ and satisfying \eqref{initial}. By arbitrariness of $T>0$ this proves global existence. Finally, arguing by
contradiction and assuming the existence of two solutions, we subtract the two linear equations for the two solutions and we obtain an homogeneous
linear problem; the energy conservation then shows that the (nonnegative) energy is always 0, which proves that the two solutions are, in fact, the same.
This completes the proof of Theorem \ref{wplinear}.

\subsection{The existence and uniqueness result}

The purpose of this final subsection is to prove the following statement.

\begin{theorem}\label{wp}
For all $y^0,\theta^0,p_i^0,y^1,\theta^1,p_i^1$ satisfying \eqref{reginitial} there exists a unique (global in time) weak solution
$(p_1,p_2,y,\theta)\in X_\infty$ of \eqref{primordine} satisfying the initial conditions \eqref{initial}.
\end{theorem}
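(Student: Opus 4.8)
The plan is to obtain the solution of the nonlinear, nonlocal system \eqref{primordine} as a fixed point of a solution operator built on the linear theory of Theorem \ref{wplinear}. Fix $T>0$ and let $Y_T$ be the complete metric space of those $(p_1,p_2,y,\theta)$ satisfying $\theta,p_i\in C^0([0,T];H^1_0(0,\pi))\cap C^1([0,T];L^2(0,\pi))$ and $y\in C^0([0,T];H^2\cap H^1_0(0,\pi))\cap C^1([0,T];L^2(0,\pi))$ together with the prescribed initial conditions \eqref{initial}, with distance induced by $\|u\|_{Y_T}:=\sup_{t\in[0,T]}\big(\|y\|_{H^2}+\|\dot y\|_2+\|\theta\|_{H^1}+\|\dot\theta\|_2+\sum_i(\|p_i\|_{H^1}+\|\dot p_i\|_2)\big)$. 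Given $\bar u=(\bar p_1,\bar p_2,\bar y,\bar\theta)\in Y_T$, I would let the forcings $g_j(\bar u)$ be the nonlocal and nonlinear terms on the right-hand sides of \eqref{primordine} evaluated at $\bar u$, for instance $g_1(\bar u)=\frac{AE}{L_c}\big[\int_0^\pi\frac{s'\bar p_1'}{\sr}\,dx\big]\frac{s''}{\sr^3}+\Phi(\bar y+\ell\bar\theta-\bar p_1)$, and define $\mathcal T(\bar u)$ as the unique solution of \eqref{lineare} with these forcings and data \eqref{initial}, supplied by Theorem \ref{wplinear}. A fixed point of $\mathcal T$ is exactly a weak solution of \eqref{primordine}.

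To see that $\mathcal T$ is well defined I first check $g_j(\bar u)\in C^0([0,\pi]\times[0,T])$, as Theorem \ref{wplinear} requires. Since $s$ solves \eqref{eqparabola} we have $s\in C^2([0,\pi])$ and $\sr\ge1$, so $s''/\sr^3\in C^0([0,\pi])$, while $t\mapsto\int_0^\pi\frac{s'\bar p_1'(\cdot,t)}{\sr}\,dx$ is continuous because $\bar p_1\in C^0([0,T];H^1_0)$; hence the nonlocal term is jointly continuous. For the nonlinear term I use that in dimension one $H^1_0(0,\pi)\embed C^0([0,\pi])$, so the arguments $\bar y\pm\ell\bar\theta-\bar p_i$ lie in $C^0([0,\pi]\times[0,T])$, and $\Phi$ is Lipschitz (as noted in the Remark following \eqref{elenhan}), whence $\Phi(\bar y\pm\ell\bar\theta-\bar p_i)\in C^0$. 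Thus $\mathcal T:Y_T\to Y_T$.

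The core of the argument is to show $\mathcal T$ is a contraction for $T$ small. Given $\bar u^{(1)},\bar u^{(2)}\in Y_T$, the difference $\mathcal T(\bar u^{(1)})-\mathcal T(\bar u^{(2)})$ solves \eqref{lineare} with \emph{zero} initial data and forcings $g_j(\bar u^{(1)})-g_j(\bar u^{(2)})$. The nonlocal contribution is the bounded linear operator $\bar p_i\mapsto\frac{AE}{L_c}[\int_0^\pi\frac{s'\bar p_i'}{\sr}\,dx]\frac{s''}{\sr^3}$ applied to $\bar p_i^{(1)}-\bar p_i^{(2)}$, hence controlled in $L^2$ by $\|\bar p_i^{(1)}-\bar p_i^{(2)}\|_{H^1}$; the nonlinear contribution is bounded pointwise by $L_\Phi$ times the difference of arguments, hence in $L^2$ by $\|\bar u^{(1)}-\bar u^{(2)}\|_{Y_T}$ through the embedding above. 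Therefore $\sup_{[0,T]}\|g_j(\bar u^{(1)})-g_j(\bar u^{(2)})\|_2\le C\|\bar u^{(1)}-\bar u^{(2)}\|_{Y_T}$ with $C$ independent of $T$ and of the data. Reproducing the energy identity of Step 2 of the proof of Theorem \ref{wplinear} on the difference, now with vanishing initial energy (so the term $c_1$ is absent), and using the Hölder inequality gives a bound of the form $\|\mathcal T(\bar u^{(1)})-\mathcal T(\bar u^{(2)})\|_{Y_T}\le C\,T\,\|\bar u^{(1)}-\bar u^{(2)}\|_{Y_T}$. Choosing $T=T_0$ with $CT_0<1$ makes $\mathcal T$ a contraction, and Banach's fixed point theorem yields a unique solution on $[0,T_0]$; the additional regularity $\theta,p_i\in C^2([0,T_0];H^{-1})$ and $y\in C^2([0,T_0];H^*)$ is then read off from \eqref{primordine} as in Step 4 of the previous proof, so the fixed point lies in $X_{T_0}$.

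Because $\Phi$ is \emph{globally} Lipschitz and the nonlocal term is linear, the constant $C$, and hence $T_0$, depends only on the structural parameters and not on the size of the data. Thus the construction can be restarted from $(p_1,p_2,y,\theta)(T_0)$, which again satisfies \eqref{reginitial} since the solution is in $C^0([0,T_0];H^2\times H^1\times H^1\times H^1)\cap C^1([0,T_0];L^2)$, and concatenating over $[0,T_0],[T_0,2T_0],\dots$ produces a solution on $[0,\infty)$, i.e. an element of $X_\infty$. Uniqueness at each step is given by the fixed point theorem and propagates to the whole half-line; alternatively it follows from a Gronwall estimate on the difference of two solutions, whose conserved nonnegative energy \eqref{conserved} then forces it to vanish. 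I expect the contraction estimate to be the main obstacle: one must verify that testing the linear difference problem with the time derivatives yields a factor that is a positive power of $T$ with a data-independent constant, which is precisely what the global Lipschitz character of $\Phi$ and the linearity and boundedness of the nonlocal operator guarantee.
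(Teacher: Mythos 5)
Your proposal is correct and, in its core, is the same proof as the paper's: the paper defines exactly your map (there called $\Upsilon$, built from Theorem \ref{wplinear} with forcings $g_j$ evaluated at the given quadruple), works in exactly your space (there called $\B\subset Z_T$, with the lower $C^0\cap C^1$ regularity and the initial conditions built in), and proves contractivity for small $T$ by the same device: subtract the two linear systems, test with the time derivatives, use the global Lipschitz bound $|\Phi(a)-\Phi(b)|\le\kappa(x)|a-b|$ and the H\"older inequality on the nonlocal term, and integrate the resulting differential inequality $\phi'\le K\sqrt{\phi}$ with zero initial energy to get the factor $C\,T$. The only genuine divergence is the passage from local to global existence. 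The paper disposes of it in one line by invoking conservation of the energy \eqref{conserved} to rule out finite-time blow-up; you instead observe that, because $\Phi$ is globally Lipschitz and the nonlocal operator is linear and bounded, the contraction constant --- hence the existence time $T_0$ --- depends only on structural parameters and not on the data, so the solution can be restarted at $T_0, 2T_0,\dots$ and concatenated. Your route is slightly longer but self-contained: it does not require knowing that weak solutions conserve \eqref{conserved} (a fact the paper asserts but does not prove for weak solutions, and which is not entirely trivial at this regularity level), only that the trace of the solution at $t=T_0$ again satisfies \eqref{reginitial}, which is immediate from the regularity class. The paper's route is shorter and yields, as a by-product, an a priori bound on the solution in terms of the initial energy. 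Both close the argument; if anything, yours places fewer unproved claims on the record, at the modest cost of checking that the concatenated function is a weak solution across the junction times (continuity of $\ddot y,\ddot\theta,\ddot p_i$ there follows from the equations themselves, since the right-hand sides are continuous in time).
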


Global in time means here that we can take any $T>0$, including $T=\infty$; this explains the notation $X_\infty$. For the proof we use a fixed point
procedure combined with an energy estimate. The first step consists in defining the map which will be shown to have a fixed point; in the sequel
we emphasize the dependence on time of the nonlocal term.

\begin{lemma}\label{fixed}
Let $T>0$ and assume that $y^0,\theta^0,p_i^0,y^1,\theta^1,p_i^1$ satisfy \eqref{reginitial}. For all $(q_1,q_2,z,\alpha)\in X_T$ there exists a
unique weak solution $(p_1,p_2,y,\theta)\in X_T$ of the system
\begin{eqnarray*}
m\sr\, \ddot p_1&=&\left[\frac{H_0}{\sr^2}p_1'\right]'+\frac{AE}{L_c}\left[\int_0^\pi\frac{s'(x)q_1'(x,t)}{\sr}dx\right]\frac{s''(x)}{\sr^3}+\Phi(z+\ell\alpha-q_1)\\
m\sr\, \ddot p_2&=&\left[\frac{H_0}{\sr^2}p_2'\right]'+\frac{AE}{L_c}\left[\int_0^\pi\frac{s'(x)q_2'(x,t)}{\sr}dx\right]\frac{s''(x)}{\sr^3}+\Phi(z-\ell\alpha-q_2)\\
M\ddot y&=&-EIy''''-\Phi(z+\ell\alpha-q_1)-\Phi(z-\ell\alpha-q_2)\\
\frac{M}{3}\ell^2\ddot \theta&=&GK\theta''+\ell\Phi(z-\ell\alpha-q_2)-\ell\Phi(z+\ell\alpha-q_1)
\end{eqnarray*}
satisfying the initial conditions \eqref{initial}.
\end{lemma}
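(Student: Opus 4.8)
The plan is to observe that, once the input $(q_1,q_2,z,\alpha)\in X_T$ is frozen, every term on the right-hand side that originally involved the unknowns $(p_1,p_2,y,\theta)$ has been replaced by a term depending only on $(q_1,q_2,z,\alpha)$. Hence the four equations decouple and become precisely the linear system \eqref{lineare} with the prescribed forcings
\begin{align*}
g_1 &= \frac{AE}{L_c}\left[\int_0^\pi\frac{s'\,q_1'}{\sr}\,dx\right]\frac{s''}{\sr^3}+\Phi(z+\ell\alpha-q_1),\\
g_2 &= \frac{AE}{L_c}\left[\int_0^\pi\frac{s'\,q_2'}{\sr}\,dx\right]\frac{s''}{\sr^3}+\Phi(z-\ell\alpha-q_2),\\
g_3 &= -\Phi(z+\ell\alpha-q_1)-\Phi(z-\ell\alpha-q_2),\\
g_4 &= \ell\,\Phi(z-\ell\alpha-q_2)-\ell\,\Phi(z+\ell\alpha-q_1).
\end{align*}
With these forcings the weak formulation of \eqref{lineare} coincides term by term with the weak formulation required in the lemma, so existence and uniqueness of $(p_1,p_2,y,\theta)\in X_T$ satisfying \eqref{initial} follow at once from Theorem \ref{wplinear}, \emph{provided} we verify its sole hypothesis: that $g_1,g_2,g_3,g_4\in C^0([0,\pi]\times[0,T])$.

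For the nonlinear terms I would argue as follows. Since $z\in C^0([0,T];H^2\cap H^1_0)$ and $\alpha,q_i\in C^0([0,T];H^1_0)$, each combination $z\pm\ell\alpha-q_i$ lies in $C^0([0,T];H^1_0(0,\pi))$. The one-dimensional Sobolev embedding $H^1_0(0,\pi)\embed C^0([0,\pi])$ then shows that $t\mapsto(z\pm\ell\alpha-q_i)(\cdot,t)$ is continuous from $[0,T]$ into $C^0([0,\pi])$, and therefore $(x,t)\mapsto(z\pm\ell\alpha-q_i)(x,t)$ is jointly continuous on $[0,\pi]\times[0,T]$. Because $F$, and hence $\Phi(s)=F(s)-\frac{Mg}{2}$, is Lipschitz continuous (this is clear from the positive-part structure of \eqref{force}), composing a jointly continuous function with $\Phi$ preserves joint continuity; thus each term $\Phi(z\pm\ell\alpha-q_i)$ belongs to $C^0([0,\pi]\times[0,T])$.

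For the nonlocal term I would use the regularity of the cable profile: by Proposition \ref{wellposed} the function $s$ solves a second order ODE with smooth right-hand side and is therefore of class $C^2$ (indeed $C^\infty$), so $\sr\ge1$ and both $s'/\sr$ and $s''/\sr^3$ are continuous and bounded on $[0,\pi]$. Since $q_i\in C^0([0,T];H^1_0)$ makes $t\mapsto q_i'(\cdot,t)$ continuous into $L^2(0,\pi)$, pairing against the fixed $L^\infty$ coefficient $s'/\sr$ shows that $t\mapsto\int_0^\pi\frac{s'\,q_i'}{\sr}\,dx$ is a continuous real-valued function on $[0,T]$; multiplying it by the continuous bounded spatial factor $s''/\sr^3$ yields a function in $C^0([0,\pi]\times[0,T])$. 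Combining the two arguments, all four $g_j$ are continuous, and Theorem \ref{wplinear} then delivers the unique weak solution $(p_1,p_2,y,\theta)\in X_T$ satisfying \eqref{initial}.

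Since the statement is essentially a reduction to the already-established linear theorem, the main (and really the only) obstacle is the regularity bookkeeping for the forcings: one must confirm genuine \emph{joint} continuity in $(x,t)$, not mere measurability, which is exactly what the embedding $H^1_0(0,\pi)\embed C^0([0,\pi])$ and the Lipschitz character of $\Phi$ supply. Everything else, including the weak formulation, the global-in-time range, and the attainment of the initial data \eqref{initial}, is inherited verbatim from Theorem \ref{wplinear}.
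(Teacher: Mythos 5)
Your proposal is correct and follows exactly the paper's own route: freeze $(q_1,q_2,z,\alpha)$, define the same four forcings $g_j$, and invoke Theorem \ref{wplinear}. The only difference is that you spell out the verification that $g_j\in C^0([0,\pi]\times[0,T])$ (via the embedding $H^1_0(0,\pi)\embed C^0([0,\pi])$, the Lipschitz character of $\Phi$, and the smoothness of $s$), which the paper simply asserts without proof.
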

\begin{proof} We set
$$g_1(x,t)=\frac{AE}{L_c}\left[\int_0^\pi\frac{s'(x)q_1'(x,t)}{\sr}dx\right]\frac{s''(x)}{\sr^3}+\Phi(z+\ell\alpha-q_1)\, ,$$
$$g_2(x,t)=\frac{AE}{L_c}\left[\int_0^\pi\frac{s'(x)q_2'(x,t)}{\sr}dx\right]\frac{s''(x)}{\sr^3}+\Phi(z-\ell\alpha-q_2)\, ,$$
$$g_3(x,t)=-\Phi(z+\ell\alpha-q_1)-\Phi(z-\ell\alpha-q_2)\, ,$$
$$g_4(x,t)=\ell\Phi(z-\ell\alpha-q_2)-\ell\Phi(z+\ell\alpha-q_1)\, .$$
Then $g_j\in C^0([0,\pi]\times[0,T])$ for $j=1,...,4$ and we apply Theorem \ref{wplinear}.\end{proof}

Denote by $Z_T$ the subspace of $X_T$ such that
\neweq{regularity2}
\begin{array}{cc}
\displaystyle\theta,p_i\in C^0\Big([0,T];H^1_0(0,\pi)\Big)\cap C^1\Big([0,T];L^2(0,\pi)\Big)\\
y\in C^0\Big([0,T];H^2\cap H^1_0(0,\pi)\Big)\cap C^1\Big([0,T];L^2(0,\pi)\Big)
\end{array}\endeq
which is a Banach space when endowed with the norm
\begin{eqnarray*}
\|(p_1,p_2,y,\theta)\|_{Z_T}^2 &:=& \|p_1\|_{L^\infty(H^1_\xi)}^2+\|\dot{p}_1\|_{L^\infty(L^2_\xi)}^2+\|p_2\|_{L^\infty(H^1_\xi)}^2
+\|\dot{p}_2\|_{L^\infty(L^2_\xi)}^2\\
\ &\ & +\|y\|_{L^\infty(H^2)}^2+\|\dot{y}\|_{L^\infty(L^2)}^2+\|\theta\|_{L^\infty(H^1)}^2+\|\dot{\theta}\|_{L^\infty(L^2)}^2
\end{eqnarray*}
where the norms $L^\infty(\cdot)$ have the usual time-space meaning. For any $y^0,\theta^0,p_i^0,y^1,\theta^1,p_i^1$ satisfying \eq{reginitial}
we define the convex subset (complete metric space)
$$\B:=\{(p_1,p_2,y,\theta)\in Z_T;\ \mbox{\eq{initial} holds}\}\, .$$
If we take $(q_1,q_2,z,\alpha)$ and $(p_1,p_2,y,\theta)$ both satisfying \eq{initial}, then Lemma \ref{fixed} defines a map
\neweq{Upsilon}
\Upsilon:\B\to\B\, ,\qquad  \Upsilon(q_1,q_2,z,\alpha)=(p_1,p_2,y,\theta)\, .
\endeq
We now prove that for sufficiently small $T$ this map is contractive.

\begin{lemma}\label{contraction}
Let $y^0,\theta^0,p_i^0,y^1,\theta^1,p_i^1$ satisfy \eqref{reginitial}. If $T>0$ is sufficiently small, then the map $\Upsilon$ defined in
\eqref{Upsilon} is a contraction from $\B$ into $\B$.
\end{lemma}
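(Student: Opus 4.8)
The plan is to estimate the $Z_T$-distance between two images of $\Upsilon$ and to show it is bounded by $T$ times the $Z_T$-distance of the two inputs. Fix $(q_1,q_2,z,\alpha),(\tilde q_1,\tilde q_2,\tilde z,\tilde\alpha)\in\B$ and set $(p_1,p_2,y,\theta)=\Upsilon(q_1,q_2,z,\alpha)$ and $(\tilde p_1,\tilde p_2,\tilde y,\tilde\theta)=\Upsilon(\tilde q_1,\tilde q_2,\tilde z,\tilde\alpha)$, both furnished by Lemma \ref{fixed}. Writing $P_i=p_i-\tilde p_i$, $Y=y-\tilde y$, $\Theta=\theta-\tilde\theta$ and subtracting the two systems, I would obtain for $(P_1,P_2,Y,\Theta)$ a linear problem of the form \eqref{lineare} with \emph{vanishing} initial data (both images satisfy the same conditions \eqref{initial}) and with forcing terms equal to the differences of the two frozen right-hand sides. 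Each such forcing term splits into a nonlocal part, \emph{linear} in $q_i'-\tilde q_i'$, and a part given by differences of the nonlinearity $\Phi$ evaluated at the two inputs.

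The core is an energy estimate for this difference system. As in Step 2 of the proof of Theorem \ref{wplinear}, I would test the four difference equations respectively with $\dot P_1,\dot P_2,\dot Y,\dot\Theta$, integrate over $(0,\pi)$ and then over $(0,t)$. Since the initial data vanish, one is left with the identity expressing the difference energy
\[
\mathcal E(t):=\frac{m}{2}\big(\|\dot P_1\|_\xi^2+\|\dot P_2\|_\xi^2\big)+\frac12\big(\|P_1\|_{H^1_\xi}^2+\|P_2\|_{H^1_\xi}^2\big)+\frac{M}{2}\|\dot Y\|_2^2+\frac{EI}{2}\|Y\|_{H^2}^2+\frac{M}{6}\ell^2\|\dot\Theta\|_2^2+\frac{GK}{2}\|\Theta\|_{H^1}^2
\]
as the time integral of the forcing paired with the velocities; note that $\mathcal E(t)$ is equivalent, uniformly in $t$, to the square of the $Z_T$-norm of $(P_1,P_2,Y,\Theta)$ on $[0,t]$. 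To bound the right-hand side I would use two facts. The map $\Phi$ is globally Lipschitz (it is the positive part of an affine function, up to the constant $Mg/2$, as noted after \eqref{force}), so the $\Phi$-differences are bounded in $L^2$ by $\|z-\tilde z\|_2+\ell\|\alpha-\tilde\alpha\|_2+\|q_i-\tilde q_i\|_2$; and the nonlocal term is controlled by Cauchy--Schwarz, $\big|\int_0^\pi \tfrac{s'(q_i'-\tilde q_i')}{\xi}\big|\le C\,\|q_i-\tilde q_i\|_{H^1}$, together with the boundedness of $s''/\xi^3$ in $L^2$. Pairing with $\dot P_i,\dot Y,\dot\Theta$ and using the equivalence of the $L^2$ and $L^2_\xi$ norms, the integrand is dominated by $C\,D\,\sqrt{\mathcal E(\tau)}$, where $D$ bounds all input norms appearing at time $\tau$.

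This yields, for all $t\in(0,T)$, the differential inequality
\[
\mathcal E(t)\le C\,D\int_0^t\sqrt{\mathcal E(\tau)}\,d\tau ,\qquad D:=\|(q_1,q_2,z,\alpha)-(\tilde q_1,\tilde q_2,\tilde z,\tilde\alpha)\|_{Z_T}.
\]
Setting $\phi(t):=\int_0^t\sqrt{\mathcal E(\tau)}\,d\tau$ gives $\phi'(t)=\sqrt{\mathcal E(t)}\le\sqrt{C D}\,\sqrt{\phi(t)}$ with $\phi(0)=0$, whence $\sqrt{\phi(t)}\le\frac12\sqrt{C D}\,t$ and therefore $\sqrt{\mathcal E(t)}\le\frac12\,C D\,t$ (the same conclusion also follows from the Gronwall-type Lemma \ref{gronwall} with vanishing constant term). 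Taking the supremum over $t\in[0,T]$ and invoking the equivalence of norms, I obtain $\|\Upsilon(q_1,q_2,z,\alpha)-\Upsilon(\tilde q_1,\tilde q_2,\tilde z,\tilde\alpha)\|_{Z_T}\le C'\,T\,D$, so $\Upsilon$ is a contraction as soon as $C'T<1$.

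The main obstacle I expect is twofold. First, one must verify that all the constants (the equivalence constants between $\|\cdot\|_\xi,\|\cdot\|_2$ and between $\|\cdot\|_{H^1_\xi},\|\cdot\|_{H^1}$, the Lipschitz constant of $\Phi$, and the $L^2$-bound of $s''/\xi^3$) are genuinely independent of the two inputs, so that the contraction constant depends only on $T$ and on the fixed data $H_0,m,M,EI,GK,AE/L_c,\ell$; this is exactly where the structure of the frozen system is essential, since the troublesome nonlocal term is \emph{linear} in the input and the only genuine nonlinearity $\Phi$ is globally Lipschitz. Second, the energy identity is formal: as in the proof of Theorem \ref{wplinear} it should be justified at the Galerkin level and then passed to the limit, because $\dot P_i,\dot Y,\dot\Theta$ are a priori only admissible in the dual pairings. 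Once these points are settled, the smallness of $T$ closes the argument.
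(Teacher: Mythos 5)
Your proof is correct and takes essentially the same route as the paper's: subtract the two frozen linear systems, test the difference system (with zero initial data) against the difference velocities, bound the $\Phi$-differences via global Lipschitz continuity and the nonlocal terms via Cauchy--Schwarz, and integrate the resulting degenerate differential inequality to get $\|\Upsilon(q_1,q_2,z,\alpha)-\Upsilon(\tilde q_1,\tilde q_2,\tilde z,\tilde\alpha)\|_{Z_T}\le C'TD$, hence contractivity for small $T$. The one slip is the parenthetical claim that Lemma \ref{gronwall} ``with vanishing constant term'' yields the same conclusion: that lemma's bound retains the linear term $(C_1+C_2)t$, which here would produce a contribution of order $(CD)^{3/4}\sqrt{T}$ that is \emph{not} linear in $D$ and therefore cannot give a contraction estimate; it is your direct integration of $\phi'(t)\le\sqrt{CD}\,\sqrt{\phi(t)}$ with $\phi(0)=0$ (which is also exactly what the paper does) that closes the argument.
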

\begin{proof} Consider two different $(q_1^1,q_2^1,z^1,\alpha^1)$ and $(q_1^2,q_2^2,z^2,\alpha^2)$ in $\B$ and let
$(p_1^j,p_2^j,y^j,\theta^j)=\Upsilon(q_1^j,q_2^j,z^j,\alpha^j)$ for $j=1,2$. Denote by
$$p_1=p_1^1-p_1^2\, ,\quad p_2=p_2^1-p_2^2\, ,\quad y=y^1-y^2\, ,\quad \theta=\theta^1-\theta^2\, .$$
We underline that these notations should not be confused with the initial conditions \eq{initial}.
By subtracting the two systems satisfied by $(p_1^j,p_2^j,y^j,\theta^j)$ we see that $(p_1,p_2,y,\theta)$ is a weak solution of
\begin{eqnarray}
m\sr\, \ddot p_1&=&\left[\frac{H_0}{\sr^2}p_1'\right]'+
\frac{AE}{L_c}\left[\int_0^\pi\frac{s'(x)[(q_1^1)'(x,t)-(q_1^2)'(x,t)]}{\sr}dx\right]\frac{s''(x)}{\sr^3} \notag\\
 & & +\Phi(z^1+\ell\alpha^1-q_1^1)-\Phi(z^2+\ell\alpha^2-q_1^2) \label{diffp1}
\end{eqnarray}
\begin{eqnarray}
m\sr\, \ddot p_2&=& \left[\frac{H_0}{\sr^2}p_2'\right]'+
\frac{AE}{L_c}\left[\int_0^\pi\frac{s'(x)[(q_2^1)'(x,t)-(q_2^2)'(x,t)]}{\sr}dx\right]\frac{s''(x)}{\sr^3} \notag\\
& &+\Phi(z^1-\ell\alpha^1-q_2^1)-\Phi(z^2-\ell\alpha^2-q_2^2) \label{diffp2}
\end{eqnarray}
\begin{eqnarray}
M\ddot y&=&-EIy''''-\Phi(z^1+\ell\alpha^1-q_1^1)-\Phi(z^1-\ell\alpha^1-q_2^1) \notag\\
& &+\Phi(z^2+\ell\alpha^2-q_1^2)+\Phi(z^2-\ell\alpha^2-q_2^2) \label{diffy}
\end{eqnarray}
\begin{eqnarray}
\frac{M}{3}\ell^2\ddot \theta&=&GK\theta''+\ell[\Phi(z^1-\ell\alpha^1-q_2^1)-\Phi(z^1+\ell\alpha^1-q_1^1)] \notag\\
& &-\ell[\Phi(z^2-\ell\alpha^2-q_2^2)-\Phi(z^2+\ell\alpha^2-q_1^2)] \label{difftheta}
\end{eqnarray}
satisfying homogeneous initial conditions.

Multiply \eq{diffp1} by $\dot p_1$, \eq{diffp2} by $\dot p_2$, \eq{diffy} by $\dot y$, \eq{difftheta} by $\dot\theta$. Then integrate with respect
to $x$ over $(0,\pi)$ and with respect to $t$ over $(0,t)$; we obtain
\begin{eqnarray}
m\|\dot p_1(t)\|_\xi^2\!+\!\|p_1(t)\|_{H^1_\xi}^2 &\!\!=\!\!&
2\frac{AE}{L_c}\int_0^t\left[\int_0^\pi\frac{s'(x)[(q_1^1)'(x,\tau)-(q_1^2)'(x,\tau)]}{\sr}dx\right]
\left[\int_0^\pi\frac{s''(x)}{\sr^3}\dot p_1dx\right]d\tau \notag\\
 & & +2\int_0^t\!\!\int_0^\pi[\Phi(z^1\!+\!\ell\alpha^1\!-\!q_1^1)\!-\!\Phi(z^2\!+\!\ell\alpha^2\!-\!q_1^2)]\dot p_1 dxd\tau \label{boh1} \\
m\|\dot p_2(t)\|_\xi^2\!+\!\|p_2(t)\|_{H^1_\xi}^2 &\!\!=\!\!&
2\frac{AE}{L_c}\int_0^t\left[\int_0^\pi\frac{s'(x)[(q_2^1)'(x,\tau)-(q_2^2)'(x,\tau)]}{\sr}dx\right]
\left[\int_0^\pi\frac{s''(x)}{\sr^3}\dot p_2dx\right]d\tau \notag\\
 & & +2\int_0^t\!\!\int_0^\pi[\Phi(z^1\!-\!\ell\alpha^1\!-\!q_2^1)\!-\!\Phi(z^2\!-\!\ell\alpha^2\!-\!q_2^2)]\dot p_2 dxd\tau \label{boh2} \\
M\|\dot y(t)\|_2^2\!+\!EI\|y(t)\|_{H^2}^2 &\!\!=\!\!& 2\int_0^t\!\!\int_0^\pi[\Phi(z^2\!+\!\ell\alpha^2\!-\!q_1^2)\!
-\!\Phi(z^1\!+\!\ell\alpha^1\!-\!q_1^1)]\dot y dxd\tau \notag \\
\ & & +2\int_0^t\!\!\int_0^\pi[\Phi(z^2\!-\!\ell\alpha^2\!-\!q_2^2)\!-\!\Phi(z^1\!-\!\ell\alpha^1\!-\!q_2^1)]\dot y dxd\tau \label{boh3} \\
\frac{M\ell^2}{3}\|\dot\theta(t)\|_2^2\!+\!GK\|\theta(t)\|_{H^1}^2 &\!\!=\!\!&
2\ell\int_0^t\!\!\int_0^\pi\!\![\Phi(z^1\!-\!\ell\alpha^1\!-\!q_2^1)\!-\!\Phi(z^2\!-\!\ell\alpha^2\!-\!q_2^2)]\dot\theta dxd\tau \notag\\
\ & & -2\ell\int_0^t\!\!\int_0^\pi\!\![\Phi(z^1\!+\!\ell\alpha^1\!-\!q_1^1)\!-\!\Phi(z^2\!+\!\ell\alpha^2\!-\!q_1^2)]
\dot\theta dxd\tau.\label{boh4}
\end{eqnarray}

Our purpose is to add all the above identities and to obtain estimates. By adding the four left hand sides of \eq{boh1}-\eq{boh2}-\eq{boh3}-\eq{boh4},
we find $\gamma>0$ such that
$$m\|\dot p_1(t)\|_\xi^2+\|p_1(t)\|_{H^1_\xi}^2+m\|\dot p_2(t)\|_\xi^2+\|p_2(t)\|_{H^1_\xi}^2+M\, \|\dot y(t)\|_2^2+EI\, \|y(t)\|_{H^2}^2+
\frac{M}{3}\ell^2\|\dot\theta(t)\|_2^2+GK\|\theta(t)\|_{H^1}^2$$
\neweq{lower}
\ge\gamma\Big(\|\dot p_1(t)\|_\xi^2+\|p_1(t)\|_{H^1_\xi}^2+\|\dot p_2(t)\|_\xi^2+\|p_2(t)\|_{H^1_\xi}^2+\|\dot y(t)\|_2^2+\|y(t)\|_{H^2}^2+
\|\dot\theta(t)\|_2^2+\|\theta(t)\|_{H^1}^2\Big)\, .
\endeq

Next, we seek an upper bound for the sum of the four right hand sides. Two kinds of terms appear: the $\Phi$-terms and the nonlocal terms.
For the term containing $\Phi$ in \eq{boh1} we remark that, by the Lagrange Theorem,
$$
|\Phi(z^1+\ell\alpha^1-q_1^1)-\Phi(z^2+\ell\alpha^2-q_1^2)|\le\kappa(x)\Big(|z^1-z^2|+\ell|\alpha^1-\alpha^2|+|q_1^1-q_1^2|\Big)\, ;
$$
therefore, by using the H\"older inequality several times,
$$\left|\int_0^t\!\!\int_0^\pi[\Phi(z^1+\ell\alpha^1-q_1^1)-\Phi(z^2+\ell\alpha^2-q_1^2)]\dot p_1 dxd\tau\right|$$
$$\le c \int_0^t\!\!\int_0^\pi \Big(|z^1-z^2|+\ell|\alpha^1-\alpha^2|+|q_1^1-q_1^2|\Big)|\dot p_1| dxd\tau$$
\neweq{stimap1}
\le c \int_0^t \Big(\|z^1-z^2\|_2+\|\alpha^1-\alpha^2\|_2+\|q_1^1-q_1^2\|_2\Big)\|\dot p_1(\tau)\|_\xi d\tau\, .
\endeq
Similarly, the terms containing $\Phi$ in \eq{boh2}-\eq{boh3}-\eq{boh4} may be estimated, respectively, as follows
\neweq{stimap2}
\mbox{$\Phi$-terms in \eq{boh2} }\le c \int_0^t \Big(\|z^1-z^2\|_2+\|\alpha^1-\alpha^2\|_2+\|q_2^1-q_2^2\|_2\Big)\|\dot p_2(\tau)\|_\xi d\tau\, ,
\endeq
\neweq{stimay}
\mbox{$\Phi$-terms in \eq{boh3} }\le c\int_0^t\Big(\|z^1-z^2\|_2+\|\alpha^1-\alpha^2\|_2+\|q_1^1-q_1^2\|_2+\|q_2^1-q_2^2\|_2\Big)\|\dot y(\tau)\|_2 d\tau\, ,
\endeq
\neweq{stimatheta}
\mbox{$\Phi$-terms in \eq{boh4} }\le c\int_0^t\Big(\|z^1-z^2\|_2+\|\alpha^1-\alpha^2\|_2+\|q_1^1-q_1^2\|_2+\|q_2^1-q_2^2\|_2\Big)\|\dot \theta(\tau)\|_2
d\tau\, .
\endeq

Then, we upper estimate the two nonlocal terms in \eq{boh1}-\eq{boh2}:
$$
\left|\int_0^t\left[\int_0^\pi\frac{s'(x)[(q_i^1)'(x,\tau)-(q_i^2)'(x,\tau)]}{\sr}dx\right]\left[\int_0^\pi\frac{s''(x)}{\sr^3}\dot p_i dx\right]d\tau\right|
$$
$$
\le c\int_0^t\left[\int_0^\pi|(q_i^1)'(x,\tau)-(q_i^2)'(x,\tau)|dx\right]\left[\int_0^\pi|\dot p_i|dx\right]d\tau
$$
\neweq{stimanonlocal}
\le c\int_0^t\|(q_i^1)'(\tau)-(q_i^2)'(\tau)\|_2\|\dot p_i(\tau)\|_\xi d\tau\, .
\endeq

All together, \eq{stimap1}-\eq{stimap2}-\eq{stimay}-\eq{stimatheta}-\eq{stimanonlocal} prove that
$$\mbox{the sum of all the r.h.s.\ of \eq{boh1}-\eq{boh2}-\eq{boh3}-\eq{boh4}}$$
{\scriptsize
$$\le c\int_0^t\!\Big(\|z^1\!-\!z^2\|_2\!+\!\|\alpha^1\!-\!\alpha^2\|_2\!+\!\|q_1^1\!-\!q_1^2\|_2\!+\!\|q_2^1\!-\!q_2^2\|_2\Big)
\Big(\|\dot p_1(\tau)\|_\xi\!+\!\|\dot p_2(\tau)\|_\xi\!+\!\|\dot y(\tau)\|_2\!+\!\|\dot \theta(\tau)\|_2\Big)d\tau$$
$$+c\int_0^t\Big(\|(q_1^1)'(\tau)-(q_1^2)'(\tau)\|_2+\|(q_2^1)'(\tau)-(q_2^2)'(\tau)\|_2\Big)\Big(\|\dot p_1(\tau)\|_\xi+\|\dot p_2(\tau)\|_\xi\Big) d\tau$$
$$\le c\sqrt{t}\Big(\|z^1\!-\!z^2\|_{L^\infty(L^2)}\!+\!\|\alpha^1\!-\!\alpha^2\|_{L^\infty(L^2)}\!+
\!\|q_1^1\!-\!q_1^2\|_{L^\infty(L^2)}\!+\!\|q_2^1\!-\!q_2^2\|_{L^\infty(L^2)}\Big)
\left(\int_0^t\!\big(\|\dot p_1(\tau)\|_\xi^2\!+\!\|\dot p_2(\tau)\|_\xi^2\!+\!\|\dot y(\tau)\|_2^2\!+\!\|\dot\theta(\tau)\|_2^2\big)d\tau\right)^{1/2}$$
$$+c\sqrt{t}\Big(\|(q_1^1)'\!-\!(q_1^2)'\|_{L^\infty(L^2)}\!+\!\|(q_2^1)'\!-\!(q_2^2)'\|_{L^\infty(L^2)}\Big)
\left(\int_0^t\!\Big(\|\dot p_1(\tau)\|_\xi^2\!+\!\|\dot p_2(\tau)\|_\xi^2\Big) d\tau\right)^{1/2}$$
\neweq{upper}
\le c\sqrt{T}\|(q_1^1-q_1^2,q_2^1-q_2^2,z^1-z^2,\alpha^1-\alpha^2)\|_{Z_T}\left(\int_0^t
\big(\|\dot p_1(\tau)\|_\xi^2+\|\dot p_2(\tau)\|_\xi^2+\|\dot y(\tau)\|_2^2+\|\dot\theta(\tau)\|_2^2\big)d\tau\right)^{1/2}\, .
\endeq
}

By taking the sum of \eq{boh1}-\eq{boh2}-\eq{boh3}-\eq{boh4}, by taking into account the lower bound \eq{lower} and the upper bound \eq{upper},
for all $t\in(0,T)$ we obtain
{\scriptsize
$$\|\dot p_1(t)\|_\xi^2+\|p_1(t)\|_{H^1_\xi}^2+\|\dot p_2(t)\|_\xi^2+\|p_2(t)\|_{H^1_\xi}^2+\|\dot y(t)\|_2^2+\|y(t)\|_{H^2}^2+
\|\dot\theta(t)\|_2^2+\|\theta(t)\|_{H^1}^2$$
\neweq{primopasso}
\le c\sqrt{T}\|(q_1^1-q_1^2,q_2^1-q_2^2,z^1-z^2,\alpha^1-\alpha^2)\|_{Z_T}\left(\int_0^t
\big(\|\dot p_1(\tau)\|_\xi^2+\|\dot p_2(\tau)\|_\xi^2+\|\dot y(\tau)\|_2^2+\|\dot\theta(\tau)\|_2^2\big)d\tau\right)^{1/2}\, .
\endeq
}
In particular, by dropping the potential part, \eq{primopasso} implies that
$$\phi'(t)\le K\sqrt{\phi(t)}\quad\mbox{with}\quad
\phi(t)=\int_0^t\big(\|\dot p_1(\tau)\|_\xi^2+\|\dot p_2(\tau)\|_\xi^2+\|\dot y(\tau)\|_2^2+\|\dot\theta(\tau)\|_2^2\big)d\tau$$
$$\mbox{and}\quad K=c\sqrt{T}\|(q_1^1-q_1^2,q_2^1-q_2^2,z^1-z^2,\alpha^1-\alpha^2)\|_{Z_T}\, .$$
This differential inequality yields $\sqrt{\phi(t)}\le Kt/2$ for all $t\in[0,T]$ which, inserted into the right hand side of \eq{primopasso}, gives
$$\|\dot p_1(t)\|_\xi^2+\|p_1(t)\|_{H^1_\xi}^2+\|\dot p_2(t)\|_\xi^2+\|p_2(t)\|_{H^1_\xi}^2+\|\dot y(t)\|_2^2+\|y(t)\|_{H^2}^2+
\|\dot\theta(t)\|_2^2+\|\theta(t)\|_{H^1}^2$$
$$
\le cT^2 \|(q_1^1-q_1^2,q_2^1-q_2^2,z^1-z^2,\alpha^1-\alpha^2)\|_{Z_T}^2\qquad\forall t\in[0,T]\, .
$$
By taking the supremum with respect to $t\in[0,T]$ and by replacing $p_i$, $y$, $\theta$, this finally yields
$$\|(p_1^1-p_1^2,p_2^1-p_2^2,y^1-y^2,\theta^1-\theta^2)\|_{Z_T}\le C\, T\, \|(q_1^1-q_1^2,q_2^1-q_2^2,z^1-z^2,\alpha^1-\alpha^2)\|_{Z_T}\, .$$
Then, for sufficiently small $T$ the map $\Upsilon$ is contractive.
\end{proof}

Since a solution of \eqref{primordine} is a fixed point for $\Upsilon$ (as defined in \eq{Upsilon}), Lemma \ref{contraction} shows that there exists
a unique weak solution $(p_1,p_2,y,\theta)\in X_T$ of \eqref{primordine} satisfying the initial conditions \eqref{initial}, provided that $T>0$ is
sufficiently small. The conservation of the energy in \eq{conserved} shows that the solution cannot blow up in finite time and therefore the solution
is global, that is, $T=\infty$. This completes the proof of Theorem \ref{wp}.

\end{document}